\newcommand*\MSC[1][1991]{\par\leavevmode\hbox{%
\textit{#1 Mathematical subject classification:\ }}}
\newcommand\blfootnote[1]{%
  \begingroup
  \renewcommand\thefootnote{}\footnote{#1}%
  \addtocounter{footnote}{-1}%
  \endgroup
}
\def \phi {\varphi}
\def \R {\mathbb{R}}
\def \vf{\varphi}
\newcommand{\Rn}{\mathbb R^n}
\newcommand{\Hn}{\mathbb H^n}
\newcommand{\Ho}{\mathbb H^1}
\newcommand{\p}{\partial}
\newcommand{\bG}{\mathbb {G}}
\newcommand{\bg}{\mathfrak g}
\newcommand{\la}{\lambda}
\numberwithin{equation}{section}
\newcommand{\beq}{\begin{equation}}
\newcommand{\bea}[1]{\begin{array}{#1} }
\newcommand{\eeq}{ \end{equation}}
\newcommand{\ea}{ \end{array}}
\newcommand{\ve}{\varepsilon}
\newcommand{\sij}{\sum_{i,j=1}^m}
\newcommand{\nh}{\nabla_H}
\newcommand{\sul}{\Delta_H}
\newcommand{\nhh}{\tilde{\nabla}_H}
\newtheorem{theorem}{Theorem}[section]
\newtheorem{lemma}[theorem]{Lemma}
\newtheorem{proposition}[theorem]{Proposition}
\newtheorem{corollary}[theorem]{Corollary}
\newtheorem{remark}[theorem]{Remark}
\numberwithin{equation}{section}
\begin{document}

\title[A note on monotonicity and Bochner formulas, etc.]{A note on monotonicity and Bochner formulas in Carnot groups}

\blfootnote{\MSC[2020]{35R03, 35H10, 31C05}}
\keywords{Bochner formulas. Monotonicity formulas. Almgren type frequency}

\date{}

\begin{abstract}
In this note we prove two monotonicity formulas for solutions of $\Delta_H f = c$ and $\Delta_H f - \p_t f = c$ in Carnot groups. Such formulas involve the right-invariant \emph{carr\'e du champ} of a function and they are false for the left-invariant one. The main results, Theorems \ref{T:babyACF} and \ref{T:babyC}, display a resemblance with two deep monotonicity formulas respectively due to Alt-Caffarelli-Friedman for the standard Laplacian, and to Caffarelli for the heat equation. In connection with this aspect we ask the question whether an ``almost monotonicity" formula be possible. In the last section we discuss the failure of the nondecreasing monotonicity of an Almgren type functional.    
\end{abstract}

\author{Nicola Garofalo}

\address{Dipartimento d'Ingegneria Civile e Ambientale (DICEA)\\ Universit\`a di Padova\\ Via Marzolo, 9 - 35131 Padova,  Italy}
\vskip 0.2in
\email{nicola.garofalo@unipd.it}

\thanks{The author is supported in part by a Progetto SID: ``Non-local Sobolev and isoperimetric inequalities", University of Padova, 2019.}

\maketitle


\section{Introduction and statement of the results}\label{S:intro}

Monotonicity formulas play a prominent role in analysis and geometry. They are often employed in the blowup analysis of a given problem to derive information on the regularity of the solutions, or on their global configurations. In this note we prove two monotonicity formulas, Theorems \ref{T:babyACF} and \ref{T:babyC} below, in the geometric setup of  Carnot groups. While these Lie groups display some superficial similarities with the Euclidean framework, they are intrinsically non-Riemannian (see E. Cartan's seminal address \cite{Ca}), and the counterpart of many classical results simply fails to be true. Our monotonicity results fall within this category. They are false, in general, if in their statements one replaces the right-invariant \emph{carr\'e du champ} with the ``more natural" left-invariant one. 

Our interest in monotonicity formulas stems from our previous joint works \cite{DGP, DGS} on some nonholonomic free boundary problems suggested to us by people in mechanical engineering and
robotics at the Johns Hopkins University. In \cite{DGS} the optimal interior regularity $\Gamma^{1,1}_{loc}$ of the solution of a certain obstacle problem was established. While such result guarantees the boundedness of the second horizontal derivatives $X_i X_j f$ of the solution, it falls short of implying their continuity. This critical information was subsequently established in \cite{DGP} in the framework of Carnot groups of step $k=2$, where it was also proved that, under a suitable thickness assumption, the free boundary is remarkably a $C^{1,\alpha}$ non-characteristic hypersurface, suggesting a connection with the sub-Riemannian Bernstein problem, see \cite{DGNP}. The key  idea in \cite{DGP} was the systematic use of the right-invariant derivatives in the study of a
left-invariant free boundary problem\footnote{In harmonic analysis and PDEs the use of right-invariant derivatives in left-invariant problems had already appeared in the works \cite{MPR} and \cite{BL}.}. This leads us to the main theme of this note.

Given a Carnot group $(\bG,\circ)$, we denote the left-translation operator by $L_g(g') = g \circ g'$ and with $dL_g$ its differential. The right-translation will be denoted by $R_g(g') = g' \circ g$, and its differential by $dR_g$. If we fix an orthonormal basis $\{e_1,...,e_m\}$ of the horizontal layer $\bg_1$, then we can define respectively left- and right-invariant vector fields by the formulas
\[
X_i(g) = dL_g(e_i),\ \ \ \ \ \ \ \tilde X_i(g) = dR_g(e_i).
\]
More in general, for any $\zeta \in \bg$ we respectively indicate with $Z$,
and $\tilde Z$ the left- and right-invariant vector fields on
$\bG$ defined by the Lie formulas 
\begin{equation}\label{i1}
Zf(g)= \frac{d}{dt}\ f(g\circ \exp(t\zeta))\big|_{t=0},\quad\quad\quad \ \tilde{Z} f(g) = \frac{d}{dt}\ f(
\exp(t\zeta)\circ g)\big|_{t=0} .
\end{equation}
For any $\eta , \zeta \in \bg$, for the corresponding
vector fields on $\bG$ we have the following simple, yet basic, commutation identities 
\begin{equation}\label{i2}
[Y , \tilde Z] = [\tilde Y , Z] = 0.
\end{equation}
Such identities can be easily verified using \eqref{i1} and the Baker-Campbell-Hausdorff formula. 
From \eqref{i2} we have in particular $[X_i,\tilde X_j] = 0$, for $i, j=1,...,m$. Given a function $f\in C^1(\bG)$ we will respectively denote by
\begin{equation}\label{nabla}
|\nh f|^2 = \sum_{i=1}^m (X_i f)^2,\ \ \ \ \ \ |\nhh f|^2 = \sum_{i=1}^m (\tilde X_i f)^2,
\end{equation}
the left- and right-invariant \emph{carr\'e du champ} of $f$. If we indicate with $e\in \bG$ the group identity, since $X_i(e) = \tilde X_i(e)$ for $i=1,...,m$, we have
\begin{equation}\label{eq}
|\nh f(e)|^2 = |{\tilde{\nabla}}_H f(e)|^2.
\end{equation}
But the two objects in \eqref{nabla} are substantially different, except in the trivial situation in which the function $f$ depends exclusively on the horizontal variables, see for instance \eqref{difference} below. 

The left-invariant horizontal Laplacian relative to $\{e_1,...,e_m\}$ is defined on a function $f\in C^2(\bG)$ by the formula
\begin{equation}\label{L}
\sul f = \sum_{i=1}^m X_i^2 f.
\end{equation}
This operator is hypoelliptic thanks to the result in \cite{Ho}. When the step of the stratification of $\bg$ is $k=1$, then the group is Abelian and $\sul = \Delta$ is the standard Laplacian. However, in the genuinely sub-Riemannian situation $k>1$, the differential operator $\sul$ fails to be elliptic at every point of the ambient space $\bG$. We say that a function $f\in C^2(\bG)$ is subharmonic (superharmonic) if $\sul f \ge 0 (\le 0)$. We say that $f$ is harmonic if it is both sub- and superharmonic. These notions can be extended in the weak variational sense in a standard fashion. 

Let now $\rho$ be the pseudo-gauge, centred at $e$, defined in (2.7) of \cite{GR}. Let $B_r = \{g\in \bG\mid \rho(g)<r\}$ and $S_r = \p B_r$. Let $Q>N$ indicate the homogeneous dimension of $\bG$ associated with the natural anisotropic dilations ($Q = N$ only in the Abelian case $k=1$). Given a function $f\in C(B_1)$, and a number $0<\alpha <Q$, we consider the functional 
\begin{equation}\label{JG}
\mathscr M_\alpha(f,r) = \frac{1}{r^\alpha} \int_{B_r} \frac{f(g)}{\rho(g)^{Q-\alpha}} |\nabla_H\rho(g)|^2 dg.
\end{equation}
It is easy to verify (see the opening of Section \ref{S:proof}) that there exists a universal number $\omega_\alpha>0$ such that for every $r>0$ one has
\begin{equation}\label{omega}
\frac{1}{r^\alpha} \int_{B_r} \frac{1}{\rho^{Q-\alpha}} |\nabla_H\rho(g)|^2 dg = \omega_\alpha.
\end{equation}
As a consequence, one has
\begin{equation}\label{zero}
\underset{r\to 
0^+}{\lim} \mathscr M_\alpha(f,r) = \omega_\alpha f(e).
\end{equation}
We have the following. 

\begin{theorem}[Monotonicity formula]\label{T:babyACF}
Let $f$ be a solution of $\sul f = c$ in $B_1$, for some $c\in \R$. Then for any $0<\alpha<Q$ the functional 
\begin{equation}\label{babyACF}
\mathscr D_\alpha(f,r) = \frac{1}{r^\alpha} \int_{B_r} \frac{|{\tilde{\nabla}}_H f(g)|^2}{\rho(g)^{Q-\alpha}} |\nabla_H\rho(g)|^2 dg
\end{equation}
is nondecreasing in $(0,1)$. Moreover, we have for every $r\in (0,1)$
\begin{equation}\label{grades}
\omega_\alpha |\nh f(e)|^2 \le  \mathscr D_\alpha(f,r).
\end{equation}
\end{theorem}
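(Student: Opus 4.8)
The plan is to recognize the functional $\mathscr D_\alpha(f,r)$ as $\mathscr M_\alpha$ evaluated on a $\sul$-subharmonic function and then exploit that $\mathscr M_\alpha$ of such a function is nondecreasing. The decisive first observation is that every right-invariant derivative $\tilde X_j f$ is $\sul$-harmonic: by Hörmander's theorem $f$, and hence each $\tilde X_j f$, is $C^\infty$ on $B_1$, and since $\sul=\sum_{i=1}^m X_i^2$ is built from the \emph{left}-invariant fields while the $\tilde X_j$ are \emph{right}-invariant, the commutation relations \eqref{i2} yield $\sul(\tilde X_j f)=\tilde X_j(\sul f)=\tilde X_j c=0$ for $j=1,\dots,m$. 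This is precisely where right-invariance is indispensable: for $k>1$ one has $[X_i,X_j]\neq 0$, so the $X_j f$ are in general not $\sul$-harmonic and $|\nh f|^2$ is not subharmonic --- the reason the analogous statement with the left-invariant carr\'e du champ is false.

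Next I would use the identity $\sul(u^2)=2u\,\sul u+2|\nh u|^2$ with $u=\tilde X_j f$, summed over $j$, to obtain $\sul\big(|\nhh f|^2\big)=2\sum_{j=1}^m|\nh(\tilde X_j f)|^2\ge 0$. Hence $w:=|\nhh f|^2$ is a nonnegative $\sul$-subharmonic function on $B_1$, and $w(e)=|\nhh f(e)|^2=|\nh f(e)|^2$ by \eqref{eq}. Comparing \eqref{JG} and \eqref{babyACF} gives $\mathscr D_\alpha(f,r)=\mathscr M_\alpha(w,r)$, so the whole theorem reduces to the single assertion that $r\mapsto\mathscr M_\alpha(v,r)$ is nondecreasing on $(0,1)$ for every nonnegative $\sul$-subharmonic $v$. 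Granting this, the monotonicity of $\mathscr D_\alpha(f,\cdot)$ is immediate, and \eqref{grades} follows from \eqref{zero}, since $\mathscr D_\alpha(f,r)=\mathscr M_\alpha(w,r)\ge\lim_{s\to 0^+}\mathscr M_\alpha(w,s)=\omega_\alpha w(e)=\omega_\alpha|\nh f(e)|^2$.

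To prove this remaining claim I would write, via the coarea formula for $\rho$, $\mathscr M_\alpha(v,r)=r^{-\alpha}\int_0^r t^{\alpha-Q}h(t)\,dt$ with $h(t)=\int_{S_t}v\,|\nh\rho|^2|\nabla\rho|^{-1}\,d\sigma$, and observe that $t\mapsto t^{1-Q}h(t)$ is, up to a positive dimensional constant $\beta_Q$, the surface mean value of $v$ over the gauge sphere $S_t$ with respect to the kernel $|\nh\rho|^2$ attached to the fundamental solution $\rho^{2-Q}$ of $\sul$. Because $v$ is $\sul$-subharmonic, this surface average is nondecreasing in $t$; this is the sub-Riemannian mean-value inequality, which I would quote from \cite{GR} and the references therein (it ultimately rests on the identity $\sul\rho=(Q-1)\rho^{-1}|\nh\rho|^2$ away from $e$). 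Writing then $t^{1-Q}h(t)=\beta_Q\,\mathfrak m(t)$ with $\mathfrak m$ nondecreasing, one has $\mathscr M_\alpha(v,r)=\beta_Q\alpha^{-1}\big(\alpha r^{-\alpha}\int_0^r t^{\alpha-1}\mathfrak m(t)\,dt\big)$, which is a rescaled weight-$t^{\alpha-1}$ average of the nondecreasing function $\mathfrak m$ over the expanding interval $(0,r)$, hence nondecreasing in $r$; equivalently $\frac{d}{dr}\mathscr M_\alpha(v,r)\ge 0$ follows from the elementary bound $\int_0^r t^{\alpha-1}\mathfrak m(t)\,dt\le\alpha^{-1}r^\alpha\mathfrak m(r)$. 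As $r\to 0^+$ this also forces $\beta_Q=\alpha\omega_\alpha$, compatibly with \eqref{omega}--\eqref{zero}.

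The main obstacle, and really the only non-formal ingredient, is the gauge-sphere mean-value monotonicity invoked above: that is where the genuinely sub-Riemannian features of $\bG$ --- the structure of the pseudo-gauge $\rho$ and of the fundamental solution of $\sul$ --- enter, everything else being bookkeeping. The conceptual heart of the proof, however, is the very first observation: it is the vanishing of the mixed commutators $[X_i,\tilde X_j]$, and nothing else, that makes $|\nhh f|^2$ rather than $|\nh f|^2$ the correct quantity, and the same mechanism will drive Theorem \ref{T:babyC}.
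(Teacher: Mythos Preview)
Your proof is correct and follows essentially the same route as the paper: the paper packages your three steps as Proposition \ref{P:Br} (the right-invariant Bochner identity giving $\sul(|\nhh f|^2)\ge 0$), Lemma \ref{L:sbm} (the gauge-sphere mean-value monotonicity for subharmonic functions, derived from the representation formula \eqref{2psiG}--\eqref{2usqG}), and Proposition \ref{P:mono} (the coarea/weighted-average computation you carry out). Your derivation of subharmonicity via $\sul(\tilde X_j f)=0$ together with $\sul(u^2)=2u\,\sul u+2|\nh u|^2$ is in fact a slightly cleaner path to \eqref{B2r} than the general Bochner identity \eqref{B1r}, but the content is identical.
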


As we have mentioned, Theorem \ref{T:babyACF} ceases to be true, and in the worse possible way, if in the definition \eqref{babyACF} of the functional $\mathscr D_\alpha(f,r)$ we replace the right-invariant \emph{carr\'e du champ} $|\nhh f|^2$ with the left-invariant one $|\nabla_H f|^2$. 

Our next result, Theorem \ref{T:babyC}, should be seen as a parabolic companion of Theorem \ref{T:babyACF}. Denote by $p(g,g',t) = p(g',g,t)$ the smooth, symmetric, strictly positive heat kernel constructed by Folland in \cite{Fo}. Given a reasonable function $\vf$, the solution of the Cauchy problem $\p_t f - \sul f = 0$ in $\bG\times (0,\infty)$, $f(g,0) = \vf(g)$, is given by
\[
f(g,t) = P_t \vf(g) = \int_{\bG} p(g,g',t) \vf(g') dg'.
\]
\begin{theorem}[Heat monotonicity formula]\label{T:babyC}
Let $f$ be a solution of $\p_t f - \sul f = c$ in $\bG\times (-1,0]$, for some $c\in \R$, and suppose that there exist $A, \alpha>0$ such that such that for every $g\in \bG$ and $t\in [-1,0]$ one has 
\begin{equation}\label{gaussian}
|f(g,t)| \le  A\ e^{\alpha d(g,e)^2},
\end{equation} 
where we have denoted by $d(g,g')$ the control distance in $\bG$ associated with the horizontal layer $\bg_1$ of the Lie algebra. Then, there exists $T = T(\alpha)>0$ such that the functional
\begin{equation}\label{babyC}
\mathscr I(f,t) = \frac 1t \int_{-t}^0 \int_{\bG} |\nhh f(g,s)|^2 p(g,e,-s) dg ds  
\end{equation}
is nondecreasing in $t\in (0,T)$. Furthermore, we have for every $t\in (0,T)$
\begin{equation}\label{grades}
|\nh f(e,0)|^2 \le  \mathscr I(f,t).
\end{equation}
\end{theorem}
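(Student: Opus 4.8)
===BEGIN PROOF PLAN===

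\textbf{Strategy.} The plan is to mimic the classical Caffarelli heat monotonicity argument, but carried out with the \emph{right}-invariant vector fields $\tilde X_i$, exploiting crucially the commutation identity \eqref{i2}, namely $[X_i,\tilde X_j]=0$. Set $u = \tilde X_k f$ for a fixed $k\in\{1,\dots,m\}$. Since $\tilde X_k$ commutes with each $X_i^2$ and with $\p_t$, differentiating the equation $\p_t f - \sul f = c$ gives $\p_t u - \sul u = 0$, so each component of the right-invariant gradient solves the \emph{homogeneous} heat equation even though $f$ only solves it up to the constant $c$. This is the structural reason the formula works for the right-invariant carr\'e du champ and fails for the left-invariant one, where no such commutation is available. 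Introduce the Folland heat kernel normalization $G(g,s) = p(g,e,-s)$ on $s<0$, which satisfies the backward equation $\p_s G + \sul^* G = 0$ in the appropriate distributional sense (here $\sul$ is self-adjoint, so $\sul^*=\sul$), and write $\mathscr I(f,t) = \frac1t \int_{-t}^0 H(s)\,ds$ with $H(s) = \int_{\bG} |\nhh f(g,s)|^2\, p(g,e,-s)\, dg = \sum_k \int_{\bG} u_k(g,s)^2 G(g,s)\,dg$.

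\textbf{Key steps.} First I would establish, under the Gaussian bound \eqref{gaussian} and its consequence that the $\tilde X_k f$ also satisfy a (slightly worse) Gaussian bound on a shorter time interval $[-T,0]$ with $T = T(\alpha)$, that all the integrals and integrations by parts below converge absolutely; this is where the threshold $T(\alpha)$ enters, exactly as in the Euclidean case one needs $4\alpha t < 1$ type conditions against the kernel $p(g,e,-s) \sim (-s)^{-Q/2} e^{-d(g,e)^2/c(-s)}$. Second, I would compute $H'(s)$: differentiating under the integral sign, using $\p_s u = \sul u$ and $\p_s G = -\sul G$, integrating by parts twice, the bulk terms combine to give the dissipation identity
\[
H'(s) = 2\int_{\bG} \sum_k |\nh u_k|^2\, G\, dg \;-\; \int_{\bG} \Big(\sum_k u_k^2\Big)\,\p_s G\,dg \;-\; \text{(lower order)},
\]
and after reorganizing one obtains that $H(s)$ is, up to controllable error, comparable to an energy whose monotonicity in $s$ is governed by a Rellich--Necas / caloric Caccioppoli computation against $G$. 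The cleanest route is to show directly that $s\mapsto H(s)$ is nondecreasing on $(-T,0)$ — intuitively, as $s\to 0^-$ the Gaussian concentrates at $e$ and $H(s)\to |\nhh f(e,0)|^2 = |\nh f(e,0)|^2$ by \eqref{eq}, while for earlier times the heat flow has smeared the energy out — and then the averaged quantity $\frac1t\int_{-t}^0 H(s)\,ds$ inherits monotonicity in $t$ by the elementary lemma that the running average of a nondecreasing function is nondecreasing. The bound \eqref{grades} then follows since $\mathscr I(f,t) \ge \lim_{t\to 0^+}\mathscr I(f,t) = \lim_{s\to 0^-} H(s) = |\nh f(e,0)|^2$, using \eqref{zero}-type behavior of the heat kernel.

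\textbf{Main obstacle.} The principal difficulty is the differentiation $H'(s)$ and showing its sign: one must handle the boundary-at-infinity terms generated by the integrations by parts (controlled by \eqref{gaussian} and the Gaussian decay of $p$, valid only for $|s| < T(\alpha)$), and one must correctly account for the fact that $G(g,s) = p(g,e,-s)$ is a heat kernel centered at the identity $e$ rather than a clean Euclidean Gaussian — so the identities $\p_s G = \sul G$ and the relation $\nh G$ versus $G$ are only known qualitatively via Folland's construction and the Li--Yau / Jerison--S\'anchez-Calle type estimates. Rather than an exact pointwise Bochner identity (which would be the hard $|\nh u_k|^2 \le \dots$ step in a true ACF argument), the honest claim here is the ``baby'' monotonicity: the dissipation term $2\int \sum_k|\nh u_k|^2 G\,dg \ge 0$ already furnishes the sign once the heat-kernel cross terms are shown to have a favorable sign or to vanish by the self-adjointness and scaling of $p$. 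I expect that verifying this sign — equivalently, that the time derivative of $H$ is a sum of a manifestly nonnegative Dirichlet energy and a term that vanishes because $\p_s p(g,e,-s) + \sul_g p(g,e,-s) = 0$ — is the crux, and the Gaussian hypothesis \eqref{gaussian} is exactly what is needed to justify every integration by parts on the noncompact group $\bG$.

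===END PROOF PLAN===
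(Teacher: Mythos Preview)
Your overall strategy is the same as the paper's: exploit $[X_i,\tilde X_j]=0$ so that each $u_k=\tilde X_k f$ solves the homogeneous heat equation, show that the inner integral
\[
H(s)=\int_{\bG}|\nhh f(g,s)|^2\,p(g,e,-s)\,dg = P_{-s}\big(|\nhh f(\cdot,s)|^2\big)(e)
\]
is monotone in $s$, and then pass to the running average $\mathscr I(f,t)=\tfrac1t\int_{-t}^0 H(s)\,ds$. The paper packages the derivative computation as the right-invariant Bochner identity $\sul(|\nhh f|^2)=2\langle\nhh f,\nhh(\sul f)\rangle+2\sum_i|\nhh(X_i f)|^2$ and uses $\sul P_t=P_t\sul$ rather than integrating by parts against $G$, but this is the same calculation as yours done componentwise.

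However, you have the direction of monotonicity of $H$ backwards, and your displayed expression for $H'(s)$ is garbled. Carried out correctly, with $\p_s u_k=\sul u_k$ and $\p_s G=-\sul G$, one gets after moving $\sul$ onto $u_k^2$
\[
H'(s)=2\sum_k\int u_k\,\sul u_k\,G\,dg-\sum_k\int\sul(u_k^2)\,G\,dg=-2\sum_k\int_{\bG}|\nh u_k|^2\,G\,dg\ \le\ 0,
\]
so $s\mapsto H(s)$ is \emph{nonincreasing} on $(-T,0)$, equivalently $\tau\mapsto P_\tau(|\nhh f(\cdot,-\tau)|^2)(e)$ is nondecreasing on $(0,T)$, which is exactly what the paper proves via \eqref{momo2}. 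Your heuristic (``for earlier times the heat flow has smeared the energy out'') points the wrong way, and with your stated direction the running-average lemma would give $\mathscr I$ nonincreasing and the inequality \eqref{grades} reversed. The lemma you want applies to $\tau\mapsto H(-\tau)$: since this is nondecreasing, $\mathscr I(f,t)=\tfrac1t\int_0^t H(-\tau)\,d\tau$ is nondecreasing and bounded below by $H(0^-)=|\nhh f(e,0)|^2=|\nh f(e,0)|^2$. Once you fix this sign, your argument is essentially the paper's; note also that writing everything as $P_t(\cdot)(e)$ and invoking $\sul P_t=P_t\sul$, as the paper does, spares you from justifying the integrations by parts at infinity.
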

Similarly to Theorem \ref{T:babyACF}, also Theorem \ref{T:babyC} fails in general if in the definition of $\mathscr I(f,t)$ we replace $|\nhh f|^2$ with $|\nh f|^2$. This failure is caused in both cases by the fact that in sub-Riemannian geometry it is not true in general that if $\sul f = c$, then $|\nabla_H f|^2$ is subharmonic! There exist harmonic functions $f$ such that $|\nabla_H f|^2$ is superharmonic on large regions of $\bG$! For instance, consider in the Heisenberg group $\mathbb H^1$ (for this Lie group see the discussion following Corollary \ref{C:B} below) the harmonic function\footnote{For the reader's understanding, we mention that $f = P_3 - P_1$ where $P_3(x,y,\sigma) = x^3 + xy^2 - 8y\sigma$ is a solid harmonic of degree three in $\mathbb H^1$, and $P_1(x,y,\sigma) = x$ is a solid harmonic of degree one. Such solid harmonics were constructed by Greiner, see \cite[p.387]{Gr}} 
\begin{equation}\label{fmia}
f(x,y,\sigma) = x^3 + xy^2 - 8y\sigma - x.
\end{equation}
A  calculation shows that
\begin{equation}\label{ted}
\sul(|\nh f|^2)(x,y,\sigma) = 176 x^2 + 432 y^2 - 32 \le 432 |z|^2 - 32\le 0,
\end{equation}
provided that the point $g = (x,y,\sigma)$ belongs to the infinite cylinder $|z|^2 \le \frac{2}{27}$ in $\mathbb H^1$. Another example is provided by the harmonic function \eqref{f} below. 
In contrast with \eqref{ted}, as a consequence of our right-invariant Bochner identity in Proposition \ref{P:Br} below, we show the crucial fact that in any Carnot group $\bG$ a solution of $\sul f = c$ always satisfies globally
\[
\sul(|\nhh f|^2) \ge 0.
\] 

The reader who is versed in free boundary problems will recognise in Theorems \ref{T:babyACF} and \ref{T:babyC} a resemblance with two deep monotonicity formulas respectively due to Alt-Caffarelli-Friedman (ACF henceforth) for the standard Laplacian \cite[Lemma 5.1]{ACF}, and to Caffarelli for the classical heat equation \cite[Theor. 1]{Caf93}. The former states that if one is given in the Euclidean ball $B_1\subset \Rn$ two continuous functions $f_\pm$ satisfying
\[
f_\pm \ge 0,\ \ \ \Delta f_\pm \ge 0,\ \ \ f_+ \cdot f_- = 0,\ \ \ f_+(0) = f_-(0) = 0,
\]
then the ACF functional 
\begin{equation}\label{acf}
\Phi(f_+,f_-,r) = \frac{1}{r^4} \int_{B_r} \frac{|\nabla f_+|^2}{|x|^{n-2}} dx  \int_{B_r} \frac{|\nabla f_-|^2}{|x|^{n-2}} dx
\end{equation}
is nondecreasing for $0<r<1$. This monotonicity formula plays a critical role in free boundary problems with a double phase, see e.g. \cite{CS} and \cite{PSU}, where it is used to show that: (a) $\underset{r\to 0^+}{\lim} \Phi(f_+,f_-,r)$ exists, and (b) such limit is less than $\Phi(f_+,f_-,1)$. When $f_{\pm}$ are smooth and their supports intersect along a hypersurface $\Sigma$ through the origin, then the $\underset{r\to 0^+}{\lim} \Phi(f_+,f_-,r)$ is the product of the normal derivatives to $\Sigma$ of $f_{\pm}$ in $x = 0$. Specialised to the case $\bG = \Rn$ and $\alpha = 2$ the functional \eqref{babyACF} in our Theorem \ref{T:babyACF} is precisely half of the ACF functional in \eqref{acf}. Similarly, the functional \eqref{babyC} in our Theorem \ref{T:babyC} is half of the Caffarelli functional for the heat equation in \cite{Caf93}. 

In light of Theorems \ref{T:babyACF} and \ref{T:babyC} above, and with potential applications to nonholonomic free boundary problems with two phases in mind, it is tempting to propose the following \emph{conjecture}:

\medskip

\begin{itemize}
\item[\textbf{(1)}] \emph{Let $\bG$ be a Carnot group and suppose that in $B_1\subset \bG$ we have two continuous functions $f_\pm$ satisfying}
\[
f_\pm \ge 0,\ \ \ \sul f_\pm = - 1,\ \ \ f_+ \cdot f_- = 0\ \ \ f_+(e) = f_-(e) = 0.
\]
\emph{Prove (or disprove?) that the functional}
\begin{equation}\label{D2}
\mathscr D_2(f_+,f_-,r) = \frac{1}{r^4} \mathscr D_2(f_+,r) \mathscr D_2(f_-,r)
\end{equation}
\emph{satisfies the following bound for $0<r<1$} 
\begin{equation}\label{cjk}
\mathscr D_2(f_+,f_-,r) \le C \left\{1 + \mathscr D_2(f_+,1) + \mathscr D_2(f_-,1)\right\}.
\end{equation}
\item[\textbf{(2)}] \emph{Let $\bG$ be a Carnot group and suppose that we have two continuous functions $f_\pm$ satisfying in $\bG\times (-1,0]$}
\[
f_\pm \ge 0,\ \ \ (\sul -\p_t) f_\pm = - 1,\ \ \ f_+ \cdot f_- = 0, \ \ \ f_+(e,0) = f_-(e,0) = 0,
\]
\emph{and with moderate growth at infinity. Prove (or disprove?) that the functional}
\[
\mathscr I(f_+,f_-,t) = \frac{1}{t^2} \mathscr I(f_+,t) \mathscr I(f_-,t)
\]
\emph{satisfies the following bound for $0<t<1$} 
\begin{equation}\label{ck}
\mathscr I(f_+,f_-,t) \le C \left\{1 + \mathscr I(f_+,1) + \mathscr I(f_-,1)\right\}.
\end{equation}
\end{itemize}

\medskip

Besides the circumstantial  evidence provided by Theorems \ref{T:babyACF} and \ref{T:babyC}, this conjecture is inspired by the Caffarelli, Jerison and Kenig powerful modification of the ACF monotonicity formula in which the assumption $\Delta f_\pm \ge 0$ is replaced by the weaker $\Delta f_\pm \ge -1$, and which does not have any ``monotonicity" left in its statement, see \cite[Theor. 1.3]{CJK}. While when $\bG = \Rn$ a uniform bound such as \eqref{cjk} appears only remotely connected to the ACF monotonicity \eqref{acf}, it does nonetheless lead to the Lipschitz continuity of the solutions, and once this is known than one can go full circle and restore monotonicity, as shown in \cite{CJK}. We also cite \cite{Sha} for various applications of the Caffarelli-Jerison-Kenig result to the $C^{1,1}$ regularity in free boundary problems, and \cite{CK} and \cite{MP} for some remarkable parabolic versions of the monotonicity formula \eqref{acf} and the ``almost monotonicity'' formulas \eqref{cjk} and \eqref{ck}. 

We reiterate that all the functionals in the above conjectured \eqref{cjk} and \eqref{ck} involve the right-invariant \emph{carr\'e du champ} $|\nhh f_\pm|^2$. In this respect, we mention that in the recent papers \cite{FF1, FF2} the authors have proposed in the Heisenberg group $\Hn$ a nondecreasing monotonicity formula in which the ACF functional is substituted by the following one containing the left-invariant \emph{carr\'e du champ} of the functions $f_+$ and $f_-$
\begin{equation}\label{ff}
\mathfrak I(f_+,f_-,r) = \frac{1}{r^4} \int_{B_r} \frac{|\nh f_+(g)|^2}{\rho(g)^{Q-2}} dg  \int_{B_r} \frac{|\nh f_-(g)|^2}{\rho(g)^{Q-2}} dg.
\end{equation}
 The same authors have quite recently recognised in \cite[Theor. 1.1]{FF3} that their conjecture cannot be possibly true. In $\mathbb H^1$ with coordinates $g = (x,y,\sigma)$ they consider the following harmonic function (see the footnote to \eqref{fmia} above) 
 \begin{equation}\label{f}
f(x,y,\sigma) = x + 6 y \sigma - x^3,
\end{equation}
and with rather long calculations they show that 
\[
r\ \longrightarrow\ \frac{1}{r^2} \int_{B_r} \frac{|\nh f(g)|^2}{\rho(g)^{Q-2}} dg
\]
is nonincreasing as $r\in (0,r_0)$ for a sufficiently small $r_0>0$. Since on the function \eqref{f} (but \eqref{fmia} would equally work) each half of \eqref{ff} is invariant with respect to the change of variable $(x,y,\sigma) \to (-x,-y,\sigma)$ (see \eqref{nh} below), they infer that
\[
\frac{1}{r^2} \int_{B_r} \frac{|\nh f_+(g)|^2}{\rho(g)^{Q-2}} dg = \frac{1}{r^2} \int_{B_r} \frac{|\nh f_-(g)|^2}{\rho(g)^{Q-2}} dg,
\]
which shows that 
\[
r\ \to\ \mathfrak I(f_+,f_-,r) = \left(\frac{1}{r^2} \int_{B_r} \frac{|\nh f_+(g)|^2}{\rho(g)^{Q-2}} dg\right)^2 = \frac 14 \left(\frac{1}{r^2} \int_{B_r} \frac{|\nh f(g)|^2}{\rho(g)^{Q-2}} dg\right)^2
\]
is nonincreasing (instead of nondecreasing) on $(0,r_0)$, thus disproving their own conjecture.
We emphasise that, instead, neither of the functions \eqref{fmia}, \eqref{f} produces a counterexample to our conjecture above. The next result gives a perspective on the negative example \eqref{f} which is somewhat different from that in \cite{FF3}.

\begin{proposition}\label{P:ce}
For the harmonic function \eqref{f} one has 
\[
\sul(|\nh f|^2)(x,y,\sigma) \le 0,
\]
for every $(x,y,\sigma)\in \mathbb H^1$ such that $x^2 + y^2 \le \frac 19$. As a consequence, the left-invariant functional \eqref{wrong} is nonincreasing for $r\in (0,\frac 13)$ for any $0<\alpha<Q$. Instead, the right-invariant functional in \eqref{D2} above,
\[
r\ \longrightarrow\ \mathscr D_2(f_+,f_-,r) = \frac{1}{r^4} \mathscr D_2(f_+,r) \mathscr D_2(f_-,r),
\]
is nondecreasing on $(0,\infty)$.
\end{proposition}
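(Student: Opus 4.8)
The plan is to deduce everything from two elementary computations carried out with the left-invariant frame $X_1 = \p_x - \tfrac{y}{2}\p_\sigma$, $X_2 = \p_y + \tfrac{x}{2}\p_\sigma$ (the normalisation in which \eqref{f} is $\sul$-harmonic) and its right-invariant counterpart $\tilde X_1 = \p_x + \tfrac{y}{2}\p_\sigma$, $\tilde X_2 = \p_y - \tfrac{x}{2}\p_\sigma$; here $T := [X_1,X_2] = \p_\sigma$ is central, so $[\sul,X_1] = -2TX_2$ and $[\sul,X_2] = 2TX_1$. For the first assertion I would expand, via the identity $\sul(u^2) = 2u\,\sul u + 2|\nh u|^2$ applied to $u = X_1 f$ and $u = X_2 f$,
\[
\sul\big(|\nh f|^2\big) = 2\sum_{i=1}^{2}\Big((X_i f)\,\sul(X_i f) + |\nh (X_i f)|^2\Big),
\]
and use $\sul f = 0$ to get $\sul(X_1 f) = -2\p_\sigma(X_2 f)$ and $\sul(X_2 f) = 2\p_\sigma(X_1 f)$. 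For \eqref{f} one has $X_1 f = 1 - 3x^2 - 3y^2$ and $X_2 f = 6\sigma + 3xy$, hence $\sul(X_1 f) = -12$, $\sul(X_2 f) = 0$, $|\nh(X_1 f)|^2 = 36(x^2+y^2)$ and $|\nh(X_2 f)|^2 = 36 x^2$; substituting,
\[
\sul\big(|\nh f|^2\big)(x,y,\sigma) = 216 x^2 + 144 y^2 - 24 \;\le\; 216(x^2+y^2) - 24 \;\le\; 0 \quad \text{on }\{x^2+y^2 \le \tfrac{1}{9}\}.
\]

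For the second assertion I would use that the monotonicity computation behind Theorem \ref{T:babyACF} (Section \ref{S:proof}) shows $r \mapsto \mathscr M_\alpha(h,r)$ to be nonincreasing on $(0,R)$ for any $h \in C^2(B_R)$ with $\sul h \le 0$ in $B_R$. Since the explicit form of the pseudo-gauge gives $\rho(g) < r \Rightarrow x^2+y^2 < r^2$, for every $r \le \tfrac{1}{3}$ the ball $B_r$ lies in $\{x^2+y^2 < \tfrac{1}{9}\}$, where $\sul(|\nh f|^2) \le 0$ by the first assertion; taking $h = |\nh f|^2$ shows that the left-invariant functional \eqref{wrong}, i.e. $\mathscr M_\alpha(|\nh f|^2,r)$, is nonincreasing on $(0,\tfrac{1}{3})$ for every $0<\alpha<Q$.

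For the last assertion the point is that $\tau(x,y,\sigma) = (-x,-y,\sigma)$ is a Lie group automorphism of $\Ho$ fixing $e$, preserving Lebesgue measure and the gauge $\rho$, and --- because $d\tau|_e = -\mathrm{Id}$ on $\bg_1$ --- satisfying $|\nh(u\circ\tau)|^2 = |\nh u|^2\circ\tau$ and $|\nhh(u\circ\tau)|^2 = |\nhh u|^2\circ\tau$ for every $u$. As $f\circ\tau = -f$, the positive and negative parts of \eqref{f} satisfy $f_- = f_+\circ\tau$, so the measure-preserving substitution $g \mapsto \tau(g)$ in the $\tau$-invariant ball $B_r$ makes the two factors in \eqref{D2} equal; and since $f_\pm$ have disjoint supports with null common boundary $\{f=0\}$, each factor equals $\tfrac{1}{2}\int_{B_r}\frac{|\nhh f|^2}{\rho^{Q-2}}|\nh\rho|^2\,dg = \tfrac{1}{2} r^2\,\mathscr D_2(f,r)$. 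Therefore
\[
\mathscr D_2(f_+,f_-,r) = \frac{1}{r^4}\Big(\tfrac{1}{2} r^2\,\mathscr D_2(f,r)\Big)^{2} = \tfrac{1}{4}\,\mathscr D_2(f,r)^2 ,
\]
and since \eqref{f} is $\sul$-harmonic on all of $\Ho$, Theorem \ref{T:babyACF} (with $c=0$) gives that $r \mapsto \mathscr D_2(f,r)$ is nondecreasing on $(0,\infty)$, whence $\mathscr D_2(f_+,f_-,r)$ is nondecreasing there as the square of a nonnegative nondecreasing function. (If one prefers, repeating the first computation for $\nhh f$ and using the rotational symmetry of $B_r$ in $z$ together with the anisotropic scaling of $\rho$ and $dg$ gives $\mathscr D_2(f,r) = \omega_2 + C r^4$ with $C>0$, whence $\mathscr D_2(f_+,f_-,r) = \tfrac{1}{4}(\omega_2 + C r^4)^2$ explicitly.)

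I expect the only genuinely delicate step to be the last one: one has to use that $\tau$ is a group automorphism --- not merely a linear substitution --- so that it transports the \emph{right}-invariant \emph{carr\'e du champ} (together with $\rho$ and $|\nh\rho|^2$) correctly, and it is precisely this that forces the two factors in \eqref{D2} to coincide and collapses $\mathscr D_2(f_+,f_-,r)$ to $\tfrac{1}{4}\mathscr D_2(f,r)^2$, after which Theorem \ref{T:babyACF} does the rest. The first two assertions are routine once the frame is fixed.
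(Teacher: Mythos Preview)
Your proof is correct and follows essentially the same approach as the paper: you compute $\sul(|\nh f|^2)=216x^2+144y^2-24$ (via the Leibniz rule and commutator relations rather than by fully expanding $|\nh f|^2$, but reaching the identical polynomial), then invoke Lemma~\ref{L:sbm} and Proposition~\ref{P:mono} for the nonincreasing part, and finally use the automorphism $(x,y,\sigma)\mapsto(-x,-y,\sigma)$ together with Theorem~\ref{T:babyACF} to reduce $\mathscr D_2(f_+,f_-,r)$ to $\tfrac14\mathscr D_2(f,r)^2$, exactly as the paper does. Your extra justification that $\tau$ is a group automorphism (hence transports $|\nhh\cdot|^2$, $\rho$ and $|\nh\rho|^2$) and your optional explicit evaluation $\mathscr D_2(f,r)=\omega_2+Cr^4$ are welcome refinements but do not change the line of argument.
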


This note contains four sections. Besides the present one, in Section \ref{S:back} we collect some background material that is needed in the rest of the paper. In Section \ref{S:proof} we prove Theorems \ref{T:babyACF}, \ref{T:babyC} and Proposition \ref{P:ce}, and discuss the role that Bochner formulas play in these results. In Section \ref{S:almgren} we discuss another famous monotonicity formula, that of Almgren \cite{A}, and we show that, in accordance with the results in \cite{GL, GR}, its sub-Riemannian counterpart generically fails. However, the fundamental question of whether or not the frequency \eqref{fre} be locally bounded, remains open at the moment.

In closing, we hope that the present note helps to clarify some of the critical aspects connected to monotonicity in non-Riemannian ambients and at the same time provides an incentive for further understanding.


\section{Background material}\label{S:back}

In this section we collect some background material that is needed in the rest of the paper. To keep the preliminaries at a minimum and avoid pointless repetitions, we routinely use from now on the definitions and notations from the paper \cite{GR}, where some Almgren type monotonicity formulas in Carnot groups and for Baouendi-Grushin operators were obtained (for the latter, see also the first papers on the subject \cite{GL, Gjde}). 
 A Carnot group of step $k\ge 1$ is a simply-connected real Lie group $(\bG, \circ)$ whose Lie algebra $\bg$ is stratified and $k$-nilpotent. This means that there exist vector spaces $\bg_1,...,\bg_k$ such that:  
\begin{itemize}
\item[(i)] $\bg=\bg_1\oplus \dots\oplus\bg_k$;
\item[(ii)] $[\bg_1,\bg_j] = \bg_{j+1}$, $j=1,...,k-1,\ \ \ [\bg_1,\bg_k] = \{0\}$.
\end{itemize}
We assume that $\bg$ is endowed with a
scalar product $\langle\cdot,\cdot\rangle$ with respect to which the layers $\bg_j's$, $j=1,...,r$,
are mutually orthogonal. We let $m_j =$ dim$\ \bg_j$, $j=
1,...,k$, and denote by $N = m_1 + ... + m_k$ the topological
dimension of $\bG$. From the assumption (ii) on the Lie
algebra it is clear that any basis of the first layer $\bg_1$ bracket generates the whole of $\bg$. Because of such special role $\bg_1$ is usually called the horizontal
layer of the stratification. For ease of notation we henceforth write $m = m_1$. In the case in which $k =1$ one has $\bg = \bg_1$, and thus $\bG$ is isomorphic to $\R^m$. There is no sub-Riemannian geometry involved and everything is classical. 
We are primarily interested in the genuinely non-Riemannian setting $k>1$. 

Henceforth, given a horizontal Laplacian $\sul$ as in \eqref{L} above, we indicate with $\Gamma(g,g') = \Gamma(g',g)$ the unique positive
fundamental solution of $-\Delta_H$ which goes to zero at infinity. Such distribution is left-translation invariant, i.e., one has
\[
\Gamma(g,g') = \tilde \Gamma(g^{-1}\circ g'),
\]
for some function $\tilde \Gamma \in C^\infty(\bG\setminus\{e\})$, where $e\in \bG$ is the group identity.
For every $r>0$, let 
\begin{equation}\label{balls}
B_r = \left\{g\in \bG \mid \Gamma(g,e)>\frac{1}{r^{Q-2}}\right\}.
\end{equation}
It was proved by Folland in \cite{Fo} that the distribution $\tilde \Gamma(g)$ is homogeneous of degree $2-Q$ with respect to the non-isotropic dilations in $\bG$ associated with the stratification of its Lie algebra $\bg$. This implies that, if we define
\begin{equation}\label{rho}
\rho(g) = \tilde \Gamma(g)^{- 1/(Q-2)},
\end{equation}
then the function $\rho$ is homogeneous of degree one. Notice that $\rho\in C^\infty(\bG\setminus\{e\}) \cap C(\bG)$. We obviously have from \eqref{balls}
\begin{equation}\label{br}
B_r = \{g\in \bG\mid \rho(g)<r\}.
\end{equation}
Henceforth, we will use the notation $S_r = \p B_r$. 

Next, denote by $p(g,g',t)$ the positive and symmetric heat kernel for $\sul - \p_t$ constructed by Folland in \cite{Fo}. We recall the following result, which combines \cite[Theor. IV.4.2 and Theor. IV.4.3]{VSC}. In what follows, if $\ell\in \mathbb N\cup\{0\}$, we consider multi-indices $(j_1,...,j_\ell)$, with $j_1,...,j_\ell\in\{1,...,m\}$.

\begin{theorem}\label{T:VSC}
There exists $C, C'>0$ such that for all $g, g'\in \bG$ and $t>0$ one has
\[
p(g,g',t) \ge \frac{C}{t^{\frac Q2}} e^{- C' \frac{d(g,g')^2}t}.
\]
Furthermore, for every $s, \ell\in \mathbb N\cup\{0\}$ and $\ve>0$, there exists $C>0$ such that for all $g, g'\in \bG$ and $t>0$ one has
\[
\left|\p_t^s X_{j_1}X_{j_2}...X_{j_\ell} p(g,g',t)\right| \le \frac{C}{t^{\frac Q2 + s + \frac{\ell}2}}  e^{-\frac{d(g,g')^2}{4(1+\ve)t}}.
\]
\end{theorem}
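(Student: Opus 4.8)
The statement collects two by-now-classical facts about the heat kernel of a sub-Laplacian on a Lie group of polynomial volume growth, and the plan is to deduce it from the general machinery of \cite{VSC} after recording the structural features peculiar to a Carnot group. Two inputs make the estimates assume the clean scale-invariant form displayed above: (i) left-invariance together with the exact parabolic scaling $p(\delta_\lambda g,\delta_\lambda g',\lambda^2 t)=\lambda^{-Q}p(g,g',t)$, which is built into Folland's construction in \cite{Fo}; and (ii) the homogeneity $|B_r|=\omega\, r^Q$ of the Haar measure, which replaces the generic volume term $V(g,\sqrt t)$ appearing throughout \cite{VSC} by $t^{Q/2}$. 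With (i)--(ii) one has volume doubling for free, and the scale-invariant $L^2$-Poincar\'e inequality on control balls holds (this is classical for Carnot groups), so that the Grigor'yan--Saloff-Coste characterisation yields the two-sided Li--Yau Gaussian bounds; in particular it gives the on-diagonal estimate $c_1 t^{-Q/2}\le p(g,g,t)\le c_2 t^{-Q/2}$, which can alternatively be read off from the global Sobolev (or Nash) inequality.

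\emph{Lower bound.} From the on-diagonal lower bound and the parabolic Harnack inequality I would run the standard chaining argument: join $g$ to $g'$ by a sub-unit path of length comparable to $d(g,g')$, cover it by $\sim 1+d(g,g')^2/t$ control balls of radius $\sqrt t$, and iterate Harnack along the chain between the times $t/2$ and $t$. Each step costs a fixed multiplicative constant and the number of steps produces the factor $e^{-C' d(g,g')^2/t}$; combined with $p(g'',g'',t)\ge c_1 t^{-Q/2}$ at one end this gives the asserted bound. Equivalently, this is \cite[Theor.\ IV.4.2]{VSC}.

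\emph{Upper bounds with derivatives.} For $p$ itself the Gaussian upper bound $p(g,g',t)\le C t^{-Q/2} e^{-d(g,g')^2/(4(1+\ve)t)}$ follows from the on-diagonal upper bound by Davies' exponential perturbation method: one conjugates $e^{t\sul}$ by $e^{\psi}$ with $\psi$ bounded and $\lambda$-Lipschitz with respect to the control distance $d$, estimates the perturbed generator, and optimises in $\lambda$; the slack between $1/4$ and $1/(4(1+\ve))$ is precisely what absorbs the first-order commutator terms created by the conjugation. To pass to $\de_t^s X_{j_1}\cdots X_{j_\ell}p$ I would, for the spatial derivatives, use the reproducing identity
\[
p(g,g',t)=\int_{\bG} p\Big(g,h,\tfrac{t}{2}\Big)\,p\Big(h,g',\tfrac{t}{2}\Big)\,dh,
\]
let $X_{j_1}\cdots X_{j_\ell}$ act on the first factor, rescale to $t=1$ by (i) (which produces exactly the weight $t^{-Q/2-\ell/2}$), and invoke the quantitative hypoelliptic a priori estimates for $\sul-\de_t$ --- H\"ormander's theorem \cite{Ho} in Rothschild--Stein form --- to dominate the sup-norm of $X_{j_1}\cdots X_{j_\ell}p(\cdot,h,\cdot)$ on a unit parabolic cylinder by an $L^1$-norm of $p(\cdot,h,\cdot)$ on a slightly larger one; inserting the Gaussian upper bound for the undifferentiated kernel and splitting the exponent across the two half-steps via the triangle inequality (with the split point optimised) yields the estimate. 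For the time derivatives $\de_t^s$ it is cleaner to use analyticity of $e^{t\sul}$ in a complex sector on $L^2(\bG)$ and Cauchy's integral formula in the time variable, which supplies the factor $t^{-s}$ while enlarging the Gaussian constant by at most an arbitrarily small amount. All of this is contained in \cite[Theor.\ IV.4.3]{VSC}.

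\emph{Main obstacle.} The only genuinely delicate bookkeeping is keeping the Gaussian weight through the spatial derivatives with a constant that can be made as close to $1/4$ as one wishes: naively combining the two half-steps with the Davies slack degrades the constant, so one must be careful about how the exponent $d(g,g')^2/t$ is distributed and how often Davies' method is applied. Since this is precisely what is carried out in \cite{VSC}, in the body of the paper the estimate is simply quoted.
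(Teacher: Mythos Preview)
Your proposal is fine as an outline of how the two estimates are proved, but there is nothing to compare it against: in the paper this theorem is not proved at all. It is stated in Section~\ref{S:back} as background material with the sentence ``We recall the following result, which combines \cite[Theor.~IV.4.2 and Theor.~IV.4.3]{VSC},'' and no argument is given. You yourself note this in your final sentence, so your write-up is consistent with the paper's treatment; the detailed sketch you provide is simply an expansion of what lies behind the citation.
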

The heat semigroup $P_t = e^{-t \sul}$ is defined on a reasonable function $f:\bG\to \R$ by the formula
\[
P_t f(g) = \int_{\bG} p(g,g',t) f(g') dg'.
\]
Similarly to the classical case, the function $u(g,t) = P_t f(g)$ is smooth in $\bG\times (0,\infty)$ and solves the Cauchy problem 
\[
\sul u - p_t u = 0\ \ \ \ \ \text{in}\ \bG\times (0,\infty),\ \ \ \ \ u(g,0) = f(g),\ \ \ \ g\in \bG.
\]
If we assume that there exist $A, \alpha>0$ such that for every $g\in \bG$ one has 
\begin{equation}\label{gaussian0}
|f(g)| \le  A\ e^{\alpha d(g,e)^2},
\end{equation} 
where we have denoted by $d(g,g')$ the control distance in $\bG$ associated with the horizontal layer $\bg_1$ of the Lie algebra, then the semigroup $P_t f(g)$ is well-defined, at least for $0<t<T$, where $T = T(\alpha)>0$ is sufficiently small. For this it suffices to observe that, if $T < \frac{1}{4(1+\ve)\alpha}$, then for $0<t<T$ one has for any $g\in \bG$
\[
|P_t f(g)| \le \int_{\bG} |f(g')| p(g,g',t) dg' \le C A e^{2\alpha d(g,e)^2} \int_{\bG} e^{-d(g',g)^2\left[\frac{1}{4(1+\ve)T} - \alpha\right]} dg' < \infty.
\] 

For $r>0$ consider now the parabolic cylinders
\[
Q_r = B_r \times (- r^2,0).
\]
As a special case of \cite[Theor. 1.1]{DG} we obtain the following.

\begin{theorem}\label{T:PS}
Suppose that $f$
solves $\sul f - \p_t f= c$ in $\bG\times \R$, for some $c\in \R$. For every $s, \ell \in
\mathbb N \cup \{0\}$ and $r>0$, one has 
\[
\underset{Q_{r/2}}{\sup} \bigg|\p_t^s
X_{j_1}X_{j_2}...X_{j_\ell}  f\bigg| \leq \frac{C}{r^{2s + \ell}}
\frac{1}{|Q_{2r}|} \int_{Q_{2r}} |f| dg' d\tau,
\]
for some constant $C=C(c,s,\ell)>0$. 
\end{theorem}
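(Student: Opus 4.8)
The plan is to reduce the assertion to the homogeneous equation $\sul h=\p_t h$, for which the estimate is the content of \cite[Theorem 1.1]{DG} (this is the sense in which Theorem \ref{T:PS} is a ``special case''), and then to transfer the conclusion back to $f$ in a way that keeps the constant independent of $r$. Since $\sul(ct)=0$ and $\p_t(ct)=c$, the function $h:=f+ct$ solves $\sul h-\p_t h=0$ in $\bG\times\R$. For such $h$, \cite[Theorem 1.1]{DG} yields, for all $s,\ell$ and all $r>0$,
\[
\sup_{Q_{r/2}}\,\Big|\p_t^s X_{j_1}\cdots X_{j_\ell}h\Big|\ \le\ \frac{C}{r^{2s+\ell}}\,\frac{1}{|Q_{2r}|}\int_{Q_{2r}}|h|\,dg'\,d\tau ,
\]
with $C=C(s,\ell)$; the independence of the constant on $r$ comes from the anisotropic parabolic dilations under which $\sul-\p_t$ is homogeneous of degree two. (If one prefers not to invoke \cite{DG}, this interior estimate is classical: H\"ormander's theorem gives $h\in C^\infty$, and representing $h$ on $Q_{r/2}$ through Folland's heat kernel and differentiating under the integral with the Gaussian bounds of Theorem \ref{T:VSC} produces the displayed inequality.)

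Next I would transfer this to $f=h-ct$. The operator $\p_t^s X_{j_1}\cdots X_{j_\ell}$ annihilates $ct$ unless $(s,\ell)=(0,0)$ or $(1,0)$, in which cases it produces $ct$ (hence a quantity bounded by $\tfrac14|c|r^2$ on $Q_{r/2}$) or the constant $c$; moreover $|h|\le|f|+4|c|r^2$ on $Q_{2r}$. Combining these with the previous display gives
\[
\sup_{Q_{r/2}}\,\Big|\p_t^s X_{j_1}\cdots X_{j_\ell}f\Big|\ \le\ \frac{C}{r^{2s+\ell}}\,\frac{1}{|Q_{2r}|}\int_{Q_{2r}}|f|\,dg'\,d\tau\ +\ \frac{C'\,|c|\,r^2}{r^{2s+\ell}} ,
\]
where the last term collects all the contributions proportional to $c$ (note that for $(s,\ell)=(1,0)$ the term $|c|$ equals $|c|r^2/r^2$ and so is of this form).

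It then remains to absorb $|c|r^2$ into the $L^1$-average of $|f|$ over $Q_{2r}$. Testing the equation against a nonnegative $\psi\in C_c^\infty(Q_{2r})$ and integrating by parts --- the horizontal fields $X_i$ are divergence-free, so $\sul$ is formally self-adjoint --- yields
\[
\int_{Q_{2r}} f\,(\sul\psi+\p_t\psi)\,dg'\,d\tau\ =\ c\int_{Q_{2r}}\psi\,dg'\,d\tau ,
\]
whence $|c|\int_{Q_{2r}}\psi\le\|\sul\psi+\p_t\psi\|_{L^\infty}\int_{Q_{2r}}|f|$. Choosing $\psi$ to be a parabolic rescaling to $Q_{2r}$ of a fixed bump supported in $Q_2$, one has $\int_{Q_{2r}}\psi\simeq r^{Q+2}$ and $\|\sul\psi+\p_t\psi\|_{L^\infty}\lesssim r^{-2}$, so that $\frac{1}{|Q_{2r}|}\int_{Q_{2r}}|f|\gtrsim|c|r^2$. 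Feeding this back into the preceding display closes the estimate, with a constant $C=C(c,s,\ell)$ (in fact one may take it independent of $c$).

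The only genuinely substantial ingredient is the homogeneous interior estimate of the first paragraph; everything after it is elementary bookkeeping. The one point that requires care is the uniformity of the constant in $r$: the homogeneous estimate is scale-covariant, but the reduction $h=f+ct$ introduces the lower-order term $\propto|c|r^2$, and if this were not absorbed it would destroy that uniformity. The test-function lower bound of the third paragraph is precisely what removes this obstacle; beyond it I anticipate no further difficulties.
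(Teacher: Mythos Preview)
Your argument is correct. The paper itself gives no proof: it simply records the statement as a direct special case of \cite[Theorem 1.1]{DG}, which already covers the situation $\sul f-\p_t f=c$ in the more general Carnot--Carath\'eodory setting, so no reduction is required. Your route---subtract the particular solution $-ct$, apply the cited estimate to the homogeneous function $h=f+ct$, and then absorb the residual term $|c|r^2$ back into $\tfrac{1}{|Q_{2r}|}\int_{Q_{2r}}|f|$ via a scaled test function---is sound and has the merit of making the dependence on $c$ completely transparent (indeed, as you note, the final constant can be taken independent of $c$). The only comment is that your reading of ``special case'' is not quite the intended one: the paper is specializing the ambient space and the right-hand side, not reducing to the homogeneous equation; but your detour costs nothing and the proof stands.
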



\section{Proof of Theorems \ref{T:babyACF}, \ref{T:babyC} and Proposition \ref{P:ce}}\label{S:proof}

In this section we prove Theorems \ref{T:babyACF} and \ref{T:babyC}, as well as Proposition \ref{P:ce}. 
With these preliminaries in place, we now return to the functional \eqref{JG} and observe that, since the function $g\to \rho(g)$ is homogeneous of degree one  with respect to the nonisotropic group dilations $\{\delta_\la\}_{\la>0}$, while $g\to |\nabla_H\rho(g)|^2$ is homogeneous of degree zero with respect to the same, the change of variable $g' = \delta_r(g)$, for which $dg' = r^Q dg$, immediately gives 
\[
\frac{1}{r^\alpha} \int_{B_r} \frac{1}{\rho^{Q-\alpha}} |\nabla_H\rho(g)|^2 dg =  \int_{B_1} \frac{1}{\rho^{Q-\alpha}} |\nabla_H\rho(g)|^2 dg = \omega_\alpha>0.
\]
This proves \eqref{omega}. The statement \eqref{zero} immediately follows from the continuity of $f$ and from \eqref{omega}.

Next, we record the following equation (see \cite[formula (3.12)]{GR} or also the earlier work \cite{CGL} for a more general result), valid for any function $\psi\in C^2(\bG)$,
\begin{equation}\label{2psiG}
\psi(e) = \frac{Q-2}{r^{Q-1}} \int_{S_r} \psi(g)
\frac{|\nabla_H\rho(g)|^2}{|\nabla\rho(g)|} dH_{N-1}(g) -
\int_{B_r} \Delta_H \psi(g) \big[\frac{1}{\rho^{Q-2}}
-\frac{1}{r^{Q-2}}\big] dg.
\end{equation}
The equation \eqref{2psiG} represents a generalisation of Gaveau's mean value formula in \cite{Ga} for harmonic functions in the Heisenberg group $\Hn$. Differentiating with respect to $r$ in \eqref{2psiG} we obtain
\begin{equation}\label{2usqG}
\frac{d}{dr} \frac{1}{r^{Q-1}} \int_{S_r} \psi(g)
\frac{|\nabla_H\rho(g)|^2}{|\nabla\rho(g)|} dH_{N-1}(g) =
\frac{1}{r^{Q-1}} \int_{B_r} \sul \psi(g) dg.
\end{equation}
From \eqref{2usqG} we immediately infer the following result.

\begin{lemma}\label{L:sbm}
Suppose that $\psi\in C^2(B_1)$. If $\sul \psi \ge 0$ ($\le 0$) in $B_1$ then the averages 
\[
r\to \frac{1}{r^{Q-1}} \int_{S_r} \psi(g)
\frac{|\nabla_H\rho(g)|^2}{|\nabla\rho(g)|} dH_{N-1}(g)
\]
are nondecreasing (nonincreasing) in $r\in (0,1)$.
\end{lemma}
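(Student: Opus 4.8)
The plan is to read the assertion straight off the identity \eqref{2usqG}, so that the proof reduces to a one-line sign argument. I would introduce
\[
\Psi(r) \;=\; \frac{1}{r^{Q-1}} \int_{S_r} \psi(g)\,\frac{|\nabla_H\rho(g)|^2}{|\nabla\rho(g)|}\,dH_{N-1}(g),
\]
the quantity whose monotonicity is in question, and observe that \eqref{2usqG} says precisely $\Psi'(r) = r^{1-Q}\int_{B_r}\sul\psi(g)\,dg$ for every $r\in(0,1)$. Hence, if $\sul\psi\ge 0$ throughout $B_1$, then the integrand is nonnegative and $\int_{B_r}\sul\psi\,dg\ge 0$ for all such $r$, so $\Psi'\ge 0$ on $(0,1)$; since $\Psi$ is the $r$-derivative of the (smooth-in-$r$) first term on the right of \eqref{2psiG}, it is $C^1$ on $(0,1)$, and a nonnegative derivative on an interval forces $\Psi$ to be nondecreasing there. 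For the superharmonic case $\sul\psi\le 0$ one either reverses all inequalities verbatim, or, more economically, applies the subharmonic case to $-\psi$ and uses that $\Psi$ depends linearly on $\psi$.

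There is essentially no obstacle here: the entire content of the lemma has already been front-loaded into \eqref{2psiG} and \eqref{2usqG}, which themselves rest only on the degree-one homogeneity of $\rho$, the degree-zero homogeneity of $|\nabla_H\rho|^2$, and the divergence theorem. The single point that deserves a word is the legitimacy of differentiating the surface integral in $r$ under the hypothesis $\psi\in C^2(B_1)$ — i.e. that the passage from \eqref{2psiG} to \eqref{2usqG} is valid — but this differentiation has already been performed in the text preceding the lemma, using the coarea formula together with the local integrability of $\rho^{2-Q}$, so nothing further is needed. I would close by remarking that this lemma is exactly the mechanism through which the monotonicity of $\mathscr D_\alpha$ and $\mathscr I$ will eventually be obtained: one applies it with $\psi = |\nhh f|^2$, whose subharmonicity (granted by the right-invariant Bochner identity of Proposition~\ref{P:Br}) is precisely the hypothesis $\sul\psi\ge 0$ above.
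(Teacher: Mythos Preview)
Your proof is correct and follows exactly the paper's approach: the lemma is read off directly from \eqref{2usqG}, which gives $\Psi'(r)=r^{1-Q}\int_{B_r}\sul\psi\,dg$, and the sign of $\sul\psi$ determines the sign of $\Psi'$. One small slip: $\Psi$ is (up to the factor $Q-2$) the first term on the right of \eqref{2psiG}, not its $r$-derivative; the $C^1$ regularity of $\Psi$ is instead immediate from \eqref{2usqG} itself, whose right-hand side is continuous in $r$.
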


Returning to the functional $\mathscr D_\alpha(f,r)$, we have the following simple, yet important, fact. 

\begin{proposition}\label{P:mono}
Suppose that the surface averages of $f$,
\begin{equation}\label{ave}
r\to \frac{1}{r^{Q-1}} \int_{S_r} f(g)
\frac{|\nabla_H\rho(g)|^2}{|\nabla\rho(g)|} dH_{N-1}(g),
\end{equation}
are nondecreasing (nonincreasing) in $r\in (0,1)$. Then $r\to \mathscr D_\alpha(f,r)$ is nondecreasing (nonincreasing) in $(0,1)$ and we have for every $r\in (0,1)$
\begin{equation}\label{e}
\omega_\alpha f(e) \le \mathscr D_\alpha(f,r),
\end{equation}
where $\omega_\alpha>0$ is the universal constant in \eqref{omega}. 
\end{proposition}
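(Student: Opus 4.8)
\textbf{Proof proposal for Proposition \ref{P:mono}.}

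The plan is to express $\mathscr D_\alpha(f,r)$ (really the functional $\mathscr M_\alpha(f,r)$, since the statement is purely about integral averages and Proposition \ref{P:mono} as phrased concerns a generic $f$, with $\mathscr D_\alpha$ obtained later by taking $f = |\nhh f|^2$) as a weighted radial integral of the surface averages in \eqref{ave}, and then differentiate in $r$. Concretely, I would use the coarea-type / polar-coordinate decomposition adapted to the gauge $\rho$: for a function $h$ on $B_r$,
\[
\int_{B_r} \frac{h(g)}{\rho(g)^{Q-\alpha}} |\nabla_H\rho(g)|^2\, dg = \int_0^r \frac{1}{t^{Q-\alpha}} \left( \int_{S_t} h(g) \frac{|\nabla_H\rho(g)|^2}{|\nabla \rho(g)|}\, dH_{N-1}(g) \right) dt,
\]
which is the standard way these surface integrals arise (and is consistent with \eqref{omega} when $h\equiv 1$, recovering the homogeneity computation). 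Writing $\Sigma(t) = t^{-(Q-1)}\int_{S_t} f \,|\nabla_H\rho|^2 |\nabla\rho|^{-1}\,dH_{N-1}$ for the average in \eqref{ave}, this gives
\[
\mathscr D_\alpha(f,r) = \frac{1}{r^\alpha}\int_0^r t^{\alpha - 1} \Sigma(t)\, dt.
\]

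Now I would simply differentiate. Setting $\Phi(r) = \mathscr D_\alpha(f,r) = r^{-\alpha}\int_0^r t^{\alpha-1}\Sigma(t)\,dt$, one computes
\[
\Phi'(r) = -\alpha r^{-\alpha - 1}\int_0^r t^{\alpha - 1}\Sigma(t)\, dt + r^{-\alpha} r^{\alpha - 1}\Sigma(r) = \frac{\alpha}{r}\left( \Sigma(r) - \frac{\alpha}{r^\alpha}\int_0^r t^{\alpha-1}\Sigma(t)\,dt \cdot \frac{1}{\alpha}\right),
\]
so more cleanly $\Phi'(r) = \frac{\alpha}{r}\big(\Sigma(r) - \bar\Sigma(r)\big)$ where $\bar\Sigma(r) := \alpha r^{-\alpha}\int_0^r t^{\alpha-1}\Sigma(t)\,dt$ is a weighted average of $\Sigma$ on $(0,r)$ with the probability weight $\alpha t^{\alpha-1} r^{-\alpha}\,dt$. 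If $\Sigma$ is nondecreasing then $\Sigma(r) \ge \bar\Sigma(r)$ pointwise (the value at the right endpoint dominates any average over the interval), hence $\Phi'(r)\ge 0$; the nonincreasing case is symmetric. This yields monotonicity of $\mathscr D_\alpha(f,r)$ on $(0,1)$.

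For the lower bound \eqref{e}: monotonicity gives $\mathscr D_\alpha(f,r) \ge \lim_{s\to 0^+}\mathscr D_\alpha(f,s)$, and by the same argument that proved \eqref{zero} — continuity of $f$ at $e$ together with the normalisation \eqref{omega} — this limit equals $\omega_\alpha f(e)$. (In the application to Theorem \ref{T:babyACF}, $f$ is replaced by $|\nhh f|^2$, which is continuous, and $|\nhh f(e)|^2 = |\nh f(e)|^2$ by \eqref{eq}, giving \eqref{grades}.) The main obstacle I anticipate is not any of this elementary calculus but rather justifying the polar-coordinate identity above with the correct surface measure and the regularity needed to differentiate under the integral sign near $t = 0$ — i.e. that $\rho$ is smooth away from $e$, that $S_t$ is a nice level set for a.e. $t$ (indeed all $t$, since $|\nabla\rho| > 0$ off $e$ by Folland's homogeneity), and that $\Sigma(t)$ extends continuously to $t=0$ with value proportional to $f(e)$. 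Once the decomposition in terms of \eqref{ave} is in hand, the proposition is immediate.
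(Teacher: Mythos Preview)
Your proof is correct and follows essentially the same route as the paper: use the coarea formula for the gauge $\rho$ to write $\mathscr D_\alpha(f,r)$ as a weighted radial integral of the surface averages $\Sigma(t)$ over $t\in(0,r)$, then compare $\Sigma(r)$ with that average to determine the sign of the derivative (the paper carries out the comparison as an explicit inequality rather than invoking your ``endpoint dominates any weighted average'' principle, but it is the same computation), and deduce \eqref{e} from monotonicity plus \eqref{zero}. One harmless arithmetic slip: your final expression for $\Phi'(r)$ should read $\tfrac{1}{r}\big(\Sigma(r)-\bar\Sigma(r)\big)$, not $\tfrac{\alpha}{r}\big(\Sigma(r)-\bar\Sigma(r)\big)$, though this does not affect the sign and hence not the conclusion.
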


\begin{proof}
Using Federer's coarea formula to differentiate \eqref{JG} one has
\[
\mathscr D_\alpha'(f,r) = -\frac{\alpha}{r^{\alpha +1}} \int_{B_r} \frac{f(g)}{\rho^{Q-\alpha}} |\nabla_H\rho(g)|^2 dg + \frac{1}{r^Q} \int_{S_r} f(g) \frac{|\nabla_H\rho(g)|^2}{|\nabla \rho(g)|} d\sigma.
\]
Assume that \eqref{ave}
are nondecreasing in $r\in (0,1)$. Again the coarea formula gives
\begin{align*}
& \frac{\alpha}{r^{\alpha +1}} \int_{B_r} \frac{f(g)}{\rho^{Q-\alpha}} |\nabla_H\rho(g)|^2 dg = \frac{\alpha}{r^{\alpha +1}} \int_0^r \int_{S_t} \frac{f(g)}{\rho^{Q-\alpha}} \frac{|\nabla_H\rho(g)|^2}{|\nabla \rho(g)|} d\sigma dt
\\
& = \frac{\alpha}{r^{\alpha +1}} \int_0^r t^{\alpha -1} \frac{1}{t^{Q-1}}\int_{S_t} f(g) \frac{|\nabla_H\rho(g)|^2}{|\nabla \rho(g)|} d\sigma dt
\\
& \le \frac{\alpha}{r^{\alpha +1}} \frac{1}{r^{Q-1}}\int_{S_r} f(g) \frac{|\nabla_H\rho(g)|^2}{|\nabla \rho(g)|} d\sigma \int_0^r t^{\alpha -1}  dt
\\
& = \frac{1}{r^Q} \int_{S_r} f(g) \frac{|\nabla_H\rho(g)|^2}{|\nabla \rho(g)|} d\sigma.
\end{align*}
This proves that $\mathscr D_\alpha'(r)\ge 0$ for $r\in (0,1)$. Similarly, one proves that $\mathscr D_\alpha'(r)\le 0$ if \eqref{ave} are nonincreasing. The second part of Proposition \ref{P:mono} is a direct consequence of the first, and of \eqref{zero}. 

\end{proof}

\begin{remark}\label{R:sub}
Since in view of Lemma \ref{L:sbm} the monotonicity of \eqref{ave} characterises sub- and superharmonicity, a similar monotonicity holds true for $r\to \mathscr D_\alpha(f,r)$ if $f$ is sub- or superharmonic in $B_1$. 
\end{remark}

We next recall that the celebrated identity of Bochner states that on a Riemannian manifold $M$ one has for $f\in C^3(M)$
\begin{equation}\label{boe}
\Delta(|\nabla f|^2) = 2 ||\nabla^2 f||^2 + 2 \langle\nabla(\Delta f),\nabla f\rangle + 2 \operatorname{Ric}(\nabla f,\nabla f),
\end{equation}
where $\operatorname{Ric}(\cdot,\cdot)$ indicates the Ricci tensor on $M$, see e.g. \cite[Sec. 4.3 on p.18]{CN}. This implies in particular that if $\Delta f = c$ for some $c\in \R$, and $\operatorname{Ric}(\cdot,\cdot)\ge 0$, then
\begin{equation}\label{bosub}
\Delta(|\nabla f|^2) \ge 2 ||\nabla^2 f||^2 \ge 0.
\end{equation}
As we will see in a short while, in sub-Riemannian geometry the fundamental subharmonicity property \eqref{bosub} fails miserably. This negative situation can be remedied by bringing the right-invariant vector fields $\tilde X_i$ to center stage. As we have mentioned, in free boundary problems the idea of working with right-invariant derivatives was first systematically developed in \cite{DGP} to establish the $C^{1,\alpha}$ regularity of the free boundary in the non-holonomic obstacle problem. A related perspective was further exploited in \cite{Gmanu2} to prove $C^{1,\alpha}$ regularity via maximum principles, and subsequently in the study of fully nonlinear equations in \cite{MM}, and of sub-Riemannian mean curvature flow in \cite{CCM}.

\begin{proposition}[Right Bochner type identity]\label{P:Br}
Let $\bG$ be a Carnot group, $f\in C^3(\bG)$,
then one has
\begin{align}\label{B1r}
\sul(|\nhh f|^2) = 2 \langle\nhh f,\nhh(\sul f)\rangle + 2 \sum_{i=1}^m |\tilde{\nabla}_H(X_i f)|^2.
\end{align}
If in particular $\sul f = c$, for some $c\in \R$, then we have
\begin{equation}\label{B2r}
\sul(|\nhh f|^2) = 2 \sum_{i=1}^m |\tilde{\nabla}_H(X_i f)|^2 \ge 0.
\end{equation}
\end{proposition}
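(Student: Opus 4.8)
The plan is to compute $\sul(|\nhh f|^2) = \sum_{i=1}^m \sum_{j=1}^m X_j^2\big((\tilde X_i f)^2\big)$ directly, exploiting the single structural fact that makes everything work: by \eqref{i2} the left-invariant fields $X_j$ commute with the right-invariant fields $\tilde X_i$, i.e. $[X_j,\tilde X_i]=0$ for all $i,j$. First I would apply the elementary product rule twice to get, for each pair $(i,j)$,
\[
X_j^2\big((\tilde X_i f)^2\big) = 2\,(\tilde X_i f)\, X_j^2(\tilde X_i f) + 2\,\big(X_j(\tilde X_i f)\big)^2 .
\]
Then I would use commutativity to move the $X_j$'s past the $\tilde X_i$: $X_j^2(\tilde X_i f) = \tilde X_i (X_j^2 f)$ and $X_j(\tilde X_i f) = \tilde X_i(X_j f)$. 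Summing first over $j$ from $1$ to $m$ turns the first term into $2(\tilde X_i f)\,\tilde X_i\big(\sum_j X_j^2 f\big) = 2(\tilde X_i f)\,\tilde X_i(\sul f)$, and the second term into $2\sum_j \big(\tilde X_i(X_j f)\big)^2$. Finally, summing over $i$ gives
\[
\sul(|\nhh f|^2) = 2\sum_{i=1}^m (\tilde X_i f)\,\tilde X_i(\sul f) + 2\sum_{i=1}^m\sum_{j=1}^m \big(\tilde X_i(X_j f)\big)^2 = 2\langle \nhh f, \nhh(\sul f)\rangle + 2\sum_{j=1}^m |\nhh(X_j f)|^2,
\]
which is \eqref{B1r} after relabelling the index. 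The specialisation \eqref{B2r} is then immediate: if $\sul f = c$ is constant, then $\tilde X_i(\sul f) = 0$ for every $i$ (right-invariant vector fields annihilate constants), so the cross term vanishes and what remains is a sum of squares, hence $\geq 0$.

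The computation requires $f\in C^3(\bG)$ so that all the third-order mixed derivatives appearing (e.g. $\tilde X_i X_j^2 f$) are continuous and the order of differentiation manipulations are legitimate; this matches the hypothesis. I would also note in passing that the contrast with the classical Bochner identity \eqref{boe} is instructive: there the curvature/Ricci term and the lack of commutation between $\nabla$ and $\Delta$ are exactly what obstruct an unconditional subharmonicity statement, whereas here replacing the left gradient in the Hessian-type term by the right-invariant one is precisely what kills the commutators $[X_j,\tilde X_i]$ and produces a clean sum of squares with no residual curvature contribution.

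There is essentially no hard analytic obstacle in this proposition — the whole content is the commutation identity \eqref{i2}, which is built into the setup. The only point requiring a little care is bookkeeping: one must sum over $j$ \emph{before} invoking $\sul f$ in the first term (so that $\sum_j X_j^2$ assembles into $\sul$ under the right-invariant derivative), and keep the double sum intact in the quadratic term. I would present the computation for a fixed $i$, sum over $i$ at the end, and remark that the genuine subtlety — namely that the analogous identity with $|\nabla_H f|^2$ on the left-hand side is \emph{false} because $[X_j,X_i]\neq 0$ in the sub-Riemannian case — is exactly the phenomenon illustrated by the explicit counterexamples \eqref{fmia} and \eqref{f} in the introduction.
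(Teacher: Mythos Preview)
Your proposal is correct and follows precisely the approach the paper indicates: the paper's own proof simply states that it is a straightforward calculation using the commutation identities $[X_i,\tilde X_j]=0$ and leaves the details to the reader, and you have supplied exactly those details.
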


\begin{proof}
The proof is a straightforward calculation that uses the commutation identities $[X_i,\tilde X_j] = 0$, $i, j =1,...,m$. We leave the details to the interested reader. 

\end{proof}

We emphasise that the two objects $|\nhh f|^2$ and $|\nh f|^2$ differ substantially. For instance, in the special case in which $\bG$ is a group of step $k=2$, with group constants $b^{\ell}_{ij}$, and (logarithmic) coordinates $g = (z_1,...,z_m,\sigma_1,...,\sigma_{m_2})$, one has
\begin{equation}\label{difference}
|\nh f|^2- |\nhh f|^2 =  2 \sum_{\ell=1}^{m_2} \left(\sum_{1\le i<j\le m} b^{\ell}_{ij} \left(z_i \p_{z_j} f - z_j \p_{z_{i} }f\right)
\right) \p_{\sigma_{\ell}} f,
\end{equation}
see \cite[Lemma 2.3]{Gmanu2}.

We can now present the
\begin{proof}[Proof of Theorem \ref{T:babyACF}]
Suppose $\sul f = c$ in $B_1$. By hypoellipticity, we know that $f\in C^\infty(B_1)$. At this point the desired conclusion is an immediate consequence of Proposition \ref{P:Br}, Lemma \ref{L:sbm} and Proposition \ref{P:mono}.

\end{proof}

Next we present the
\begin{proof}[Proof of Theorem \ref{T:babyC}]
Let $f$ be a solution of $\p_t f - \sul f = c$ in the infinite slab $\bG\times (-1,0)$. By the hypoellipticity result in \cite{Ho}, we know that $f\in C^\infty(\bG\times(-1,0))$. However, now we cannot proceed as in the proof of Theorem \ref{T:babyACF} since the set of integration is not a relatively compact set (the pseudoballs $B_r$). To make sense of the integral in \eqref{babyC} on a sufficiently small interval $t\in (0,T)$ and be able to differentiate it with respect to the parameter $t\in (-1,0)$, we use the assumption \eqref{gaussian}. Note that we can write \eqref{babyC} as follows 
\begin{equation}\label{H}
\mathscr I(|\nhh f|^2,t) = \frac 1t \int_0^t P_\tau(|\nhh f(\cdot,-\tau)|^2)(e) d\tau,
\end{equation}
provided that the function $u(g,t) = |\nhh f(g,-t)|^2)$ is such that the integral defining \[
P_t(|\nhh f(\cdot,-t)|^2)(e) = \int_{\bG} p(g,e,t) |\nhh f(g,-t)|^2 dg
\]
be finite. From Theorem \ref{T:PS} we now have for every $\ell \in \mathbb N$ and $r>0$
\begin{equation}\label{same}
\underset{Q_{r/2}}{\sup} \bigg|
X_{j_1}X_{j_2}...X_{j_\ell}  f\bigg| \leq \frac{C}{r^{\ell}}
\frac{1}{|Q_{2r}|} \int_{Q_{2r}} |f(g',\tau)| dg' d\tau \le 
\frac{A C}{r^{\ell}}
\frac{1}{|B_{2r}|} \int_{B_{2r}} e^{\alpha d(g',e)^2} dg',
\end{equation}
where in the last inequality we have used \eqref{gaussian} and the fact that $|Q_{2r}| = 4r^2 |B_{2r}|$. From \eqref{same} it is easy to show that $X_{j_1}X_{j_2}...X_{j_\ell}  f$ satisfies the same uniform estimate in \eqref{gaussian} as $f$. Since any right-invariant derivative $\tilde X_j f$ can be expressed in terms of the vector fields $X_j$ and a certain number of combinations, with polynomial coefficients, of terms $X_{j_1}X_{j_2}...X_{j_\ell}  f$, by \eqref{same} we obtain a similar a priori estimate for $|\nhh f|^2$, possibly with a larger coefficient $\alpha>0$ in the exponential. This implies that $P_\tau(|\nhh f(\cdot,-\tau)|^2)(e)$ is well-defined for $0<\tau<T$, for some $T= T(\alpha)>0$ (see the discussion prior to Theorem \ref{T:PS}).
Differentiating \eqref{H} we thus find for every $t\in (0,T)$
\begin{align*}
\frac{d}{dt} \mathscr I(|\nhh f|^2,t) = - \frac 1t \mathscr I(|\nhh f|^2,t) + 
\frac 1t P_t(|\nhh f(\cdot,-t)|^2)(e).  
\end{align*}
We infer that $t \longrightarrow \mathscr I(|\nhh f|^2,t)$ is nondecreasing (nonincreasing) in $(0,T)$ if and only if we have for every $t\in (0,T)$
\begin{equation}\label{eq}
\mathscr I(|\nhh f|^2,t)\ \le\ (\ge)\ P_t(|\nhh f(\cdot,-t)|^2)(e).
\end{equation}
We next differentiate the functional in the right-hand side of \eqref{eq} obtaining by the chain rule
\begin{align}\label{momo}
& \frac{d}{dt} \left\{P_t(|\nhh f(\cdot,-t)|^2(e))\right\} = P_t(\frac{d}{dt} (|\nhh f(\cdot,-t)|^2))(e) + \frac{dP_t}{dt}(|\nhh f(\cdot,-t)|^2)(e)
\\
& = - 2 P_t(\langle \nhh f(\cdot,t),\nhh(\p_t f(\cdot,-t))\rangle)(e) + \sul P_t(|\nhh f(\cdot,-t)|^2)(e)
\notag\\
& = - 2 P_t(\langle \nhh f(\cdot,-t),\nhh(\p_t f(\cdot,-t))\rangle)(e) + P_t(\sul(|\nhh f(\cdot,-t)|^2))(e)
\notag\\
& = 2 P_t(\langle\nhh f(\cdot,-t),\nhh(\sul f-\p_t f))(\cdot,-t)\rangle)(e)
\notag\\
& + 2 \sum_{i=1}^m P_t(|\tilde{\nabla}_H(X_i f)(\cdot,-t)|^2)(e),
\notag
\end{align}
where in the last equality in \eqref{momo} we have used \eqref{B1r} in Proposition \ref{P:Br}. Since we are assuming that $\sul f-\p_t f = c$ in $\bG\times (-1,0)$, we infer from \eqref{momo}
\begin{equation}\label{momo2}
\frac{d}{dt} \left\{P_t(|\nhh f(\cdot,-t)|^2(e))\right\} = 2 \sum_{i=1}^m P_t(|\tilde{\nabla}_H(X_i f)(\cdot,-t)|^2)(e)\ \ge\ 0,
\end{equation}
therefore the functional $t \longrightarrow P_t(|\nhh f(\cdot,-t)|^2)(e)$ is nondecreasing. This implies
\[
\mathscr I(|\nhh f|^2,t) = \frac 1t \int_0^t P_\tau(|\nhh f(\cdot,-\tau)|^2)(e) d\tau \le P_t(|\nhh f(\cdot,-t)|^2)(e),
\]
which finally proves \eqref{eq}, and therefore the nondecreasing monotonicity of $t\longrightarrow \mathscr I(|\nhh f|^2,t)$. 

\end{proof}

Having established the positive results, we next discuss the typically non-Riemannian phenomenon for which Theorems \ref{T:babyACF} and \ref{T:babyC} fail if in their statement one replaces the right-invariant \emph{carr\'e du champ} with the left-invariant one $|\nabla_H f|^2$.
We recall the following result which is \cite[Proposition 3.3]{Gmanu}.  

\begin{proposition}[Left Bochner type identity]\label{P:B}
Let $\bG$ be a Carnot group, $f\in C^3(\bG)$,
then one has
\begin{align}\label{B1}
\sul (|\nh f|^2) & = 2 ||\nabla_H^2 f||^2 + 2 \langle\nh f , \nh(\sul f)\rangle 
 + \frac{1}{2} \sij ([X_i,X_j]f)^2
\\
&  + 4 \sij X_j f [X_i,X_j] X_i f + 2 \sij X_j f\ [X_i,[X_i,X_j]]
f. 
\notag
\end{align}
\end{proposition}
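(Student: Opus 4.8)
The plan is to establish \eqref{B1} by a direct computation: I differentiate $|\nh f|^2$ twice along the left-invariant fields $X_1,\dots,X_m$ and then regroup the terms according to their geometric meaning. Throughout, $||\nabla_H^2 f||^2$ stands for the squared norm of the \emph{symmetrised} horizontal Hessian, i.e.\ $||\nabla_H^2 f||^2 = \sij \big(\tfrac12(X_iX_jf + X_jX_if)\big)^2$; it is this choice of convention that makes the term $\frac12\sij([X_i,X_j]f)^2$ appear separately in \eqref{B1}. Since $f\in C^3(\bG)$, every expression written below has classical meaning, so there is no analytic subtlety and the argument is purely algebraic.

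First I would differentiate twice. From $X_k(|\nh f|^2) = 2\sum_{i=1}^m (X_if)(X_kX_if)$, a further application of $X_k$ and summation over $k$ give
\[
\sul(|\nh f|^2) = 2\sum_{i,k=1}^m (X_kX_if)^2 + 2\sum_{i=1}^m (X_if)(\sul X_if).
\]
I then split the first sum into its symmetric and antisymmetric parts by writing $X_kX_if = \tfrac12(X_iX_kf + X_kX_if) + \tfrac12[X_k,X_i]f$ and expanding the square. The cross term vanishes, since it equals $\frac12\sum_{i,k}\big((X_kX_if)^2 - (X_iX_kf)^2\big) = 0$ by the symmetry of the double sum, and what survives is
\[
2\sum_{i,k=1}^m (X_kX_if)^2 = 2||\nabla_H^2 f||^2 + \frac12 \sij ([X_i,X_j]f)^2 ,
\]
which accounts for the first and third terms on the right of \eqref{B1}.

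It remains to commute $\sul$ past $X_i$ in $2\sum_i (X_if)(\sul X_if)$. Using $X_kX_i = X_iX_k + [X_k,X_i]$ twice, together with $X_k[X_k,X_i] = [X_k,X_i]X_k + [X_k,[X_k,X_i]]$, one obtains the second-order commutator identity
\[
X_k^2 X_i f = X_i X_k^2 f + 2[X_k,X_i]X_kf + [X_k,[X_k,X_i]]f ,
\]
and summing over $k$ gives $\sul X_if = X_i(\sul f) + 2\sum_k [X_k,X_i]X_kf + \sum_k [X_k,[X_k,X_i]]f$. Multiplying by $2X_if$ and summing over $i$ yields
\[
2\sum_i (X_if)(\sul X_if) = 2\langle\nh f,\nh(\sul f)\rangle + 4\sum_{i,k} (X_if)\,[X_k,X_i]X_kf + 2\sum_{i,k}(X_if)\,[X_k,[X_k,X_i]]f .
\]
Relabelling the dummy indices in the last two sums (rename $i\to j$ and $k\to i$, so that the sums become $\sij$ with the factor $X_jf$) turns these into precisely the fourth and fifth terms of \eqref{B1}; collecting all contributions completes the proof.

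I do not anticipate a genuine obstacle — this is an algebraic identity for $C^3$ functions on a Lie group, of the same ``straightforward calculation'' type invoked for Proposition \ref{P:Br}. The only points that demand attention are the factor $2$ in $X_k^2X_i = X_iX_k^2 + 2[X_k,X_i]X_k + [X_k,[X_k,X_i]]$, which is exactly why the coefficient of $\sij X_jf\,[X_i,X_j]X_if$ in \eqref{B1} is $4$ rather than $2$, and the sign/symmetrisation bookkeeping in the Hessian term; neither is deep, but both are easy to get wrong.
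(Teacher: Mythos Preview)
Your computation is correct: the Leibniz expansion, the symmetric/antisymmetric splitting of $X_kX_if$ (the cross term vanishes since it pairs a symmetric with an antisymmetric tensor), and the commutator identity $X_k^2X_i = X_iX_k^2 + 2[X_k,X_i]X_k + [X_k,[X_k,X_i]]$ are all accurate, and the final relabelling reproduces exactly the terms of \eqref{B1}. The paper itself does not give a proof of this proposition but simply recalls it from \cite[Proposition~3.3]{Gmanu}; your direct calculation is the standard route and is almost certainly what appears there.
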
 
In \eqref{B1} we have denoted by $\nabla_H^2 f = [f_{ij}]$ the symmetrised horizontal Hessian of $f$ with entries
\[
f_{ij} = \frac{X_i X_j f + X_j X_i f}{2}.
\]
When $\bG$ is of step $2$, then $[X_i,[X_i,X_j]] = 0$ and we obtain
from Proposition \ref{P:B}.

\begin{corollary}\label{C:B}
Let $\bG$ be a Carnot group of step $k=2$, $f\in C^3(\bG)$,
then one has
\begin{align}\label{B1}
\sul (|\nh f|^2) & = 2 ||\nabla_H^2 f||^2 + 2 \langle\nh f , \nh(\sul u)\rangle 
 + \frac{1}{2} \sij ([X_i,X_j]f)^2
\\
&  + 4 \sij X_j f [X_i,X_j] X_i f. \notag
\end{align}
\end{corollary}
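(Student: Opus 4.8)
The plan is to obtain Corollary \ref{C:B} as an immediate specialisation of the left Bochner type identity in Proposition \ref{P:B}, which is already valid on an arbitrary Carnot group for $f\in C^3(\bG)$. Comparing the right-hand side of that identity with the identity claimed in the corollary, the only discrepancy is the term $2\sij X_j f\,[X_i,[X_i,X_j]] f$. Hence the whole task reduces to showing that this term is identically zero once the step of the stratification of $\bg$ equals $k=2$; after that, what remains is precisely the asserted identity.

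To verify that this term vanishes I would invoke the stratification recalled in Section \ref{S:back}. Under the canonical identification of $\bg$ with the algebra of left-invariant vector fields on $\bG$, the vector-field bracket $[X_i,X_j]$ corresponds to the Lie-algebra bracket $[e_i,e_j]$; since $e_i,e_j\in\bg_1$, property (ii) gives $[e_i,e_j]\in[\bg_1,\bg_1]=\bg_2$. Bracketing once more, $[X_i,[X_i,X_j]]$ corresponds to $[e_i,[e_i,e_j]]\in[\bg_1,\bg_2]$, and for a step $k=2$ group property (ii) reads precisely $[\bg_1,\bg_2]=[\bg_1,\bg_k]=\{0\}$. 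Therefore $[X_i,[X_i,X_j]]=0$ as a differential operator for all $i,j\in\{1,\dots,m\}$, the last term of Proposition \ref{P:B} drops out, and the identity of Corollary \ref{C:B} follows.

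I do not expect any genuine obstacle here: all the analytic content is carried by Proposition \ref{P:B}, and the passage to step $2$ is a one-line consequence of $2$-nilpotency. The only point that deserves a modicum of care is the identification of iterated brackets of the $X_i$ with iterated Lie brackets in $\bg$, which is standard for left-invariant vector fields. One may also observe in passing that $2\sij X_j f\,[X_i,[X_i,X_j]] f$ is the only term in Proposition \ref{P:B} that feels the step being larger than $2$, which is why no further reduction of the surviving sum $4\sij X_j f\,[X_i,X_j]X_i f$ is to be expected at this level of generality.
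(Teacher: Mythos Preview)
Your proposal is correct and follows exactly the same route as the paper: the corollary is obtained from Proposition \ref{P:B} by the single observation that $[X_i,[X_i,X_j]]=0$ when the step is $k=2$, which kills the last term. The paper states this in one line just before the corollary, and your argument simply spells out why $[\bg_1,\bg_2]=\{0\}$ forces that vanishing.
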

The problem with \eqref{B1} is that, even if $\sul f = 0$, the term $4 \sij X_j f [X_i,X_j] X_i f$ can prevail so badly on the positive terms, to reverse the sign of the sum in the right-hand side. 
We have already hinted to this phenomenon with the example \eqref{fmia}, see \eqref{ted}. For the reader's understanding, we next discuss this aspect in more detail. Consider the Heisenberg group $\bG = \Hn$ with the left-invariant basis of the Lie algebra given by 
\begin{equation}\label{vf}
X_i = \p_{x_i} - \frac{y_i}2\ \p_\sigma,\ \ \ \ \ X_{n+i} = \p_{y_i} + \frac{x_i}2\ \p_\sigma,\ i=1,...,n.
\end{equation}
If we let $T = \p_\sigma$, then the only nontrivial commutators are $[X_i,X_{n+j}] = T\  \delta_{ij}$, and we find 
\[
\sij ([X_i,X_j]u)^2 = \sum_{i,j=1}^{2n} ([X_i,X_j]u)^2 = 2 \sum_{i<j} ([X_i,X_j]u)^2 = 2n (Tu)^2.
\]
Similarly, we have
\[
\sij X_j u [X_i,X_j] X_iu = \sum_{i<j}  X_j u [X_i,X_j] X_iu - \sum_{i<j}  X_i u [X_i,X_j] X_ju = \langle\nh(Tu),\nh^\perp u\rangle,
\]
where we have denoted by $\nh^\perp u = (X_{n+1}u,...,X_{2n}u, - X_1 u,...,-X_n u)$. 
Substituting the latter two equations in \eqref{B1} we obtain 
\begin{align}\label{B2}
\sul (|\nh f|^2) & = 2 ||\nabla_H^2 f||^2 + 2 \langle\nh f , \nh(\sul f)\rangle +
 +  n (T f)^2
\\
&  + 4 \langle\nh(Tf),\nh^\perp f\rangle. 
\notag
\end{align}
Now, if $\sul f = c$, with $c\in \R$, then one has from \eqref{B2} 
\begin{align}\label{B22}
\sul (|\nh f|^2)  = 
2 ||\nabla_H^2 f||^2 +  n (T f)^2 + 4 \langle\nh(Tf),\nh^\perp f\rangle.
\end{align}
The following discussion shows that the term $4 \langle\nh(Tf),\nh^\perp f\rangle$ can destroy the subharmonicity of $|\nh f|^2$. Consider the harmonic function \eqref{f} from the work \cite[Sec.5]{FF3}, but \eqref{fmia} would work equally well. Such function is the sum of two solid harmonics of degree one and three. Greiner first computed such solid harmonics in $\mathbb H^1$, see \cite[p. 387]{Gr}, and Dunkl subsequently generalised his results to $\Hn$ in \cite{Du}. The subject has since somewhat languished for lack of a complete understanding of some fundamental orthogonality and completeness issues, see the unpublished preprint \cite[p.29]{GK}, but also the discussion in Section \ref{S:almgren}. 

\begin{proof}[Proof of Proposition \ref{P:ce}]
Instead of the lengthy calculations based on spherical harmonics in \cite[Sec. 4, 5]{FF3}, we disprove the nondecreasing monotonicity of the left-invariant functional
\begin{equation}\label{wrong}
r\ \longrightarrow\ \frac{1}{r^\alpha} \int_{B_r} \frac{|\nh f(g)|^2}{\rho(g)^{Q-\alpha}} |\nabla_H\rho(g)|^2 dg
\end{equation}
by simply observing that, on the function \eqref{f}, we have $\sul(|\nh f|^2)\le 0$ in an infinite cylinder in $\mathbb H^1$. We then use Lemma \ref{L:sbm} and Proposition \ref{P:mono} to deduce the nonincreasing monotonicity of \eqref{wrong}. From \eqref{f} and \eqref{vf} simple computations give 
\begin{equation}\label{Xu}
X_1 f = 1 - 3 |z|^2,\ \ \ \ \ \ \ \ \ \ X_2 f = 6\sigma + 3 x y,
\end{equation}
and furthermore
\begin{equation}\label{X2u}
X_1^2 f = - 6 x,\ \ \ \ \ \ \ \ \ \ X_2^2 f = 6x.
\end{equation}
In particular $\sul f = 0$ in $\Ho$ (this conclusion is also obvious from the fact that $f$ is the sum of two harmonic polynomials). Using \eqref{Xu} we now find
\begin{equation}\label{nh}
|\nh f|^2 = 1 + 9 |z|^4 - 6 |z|^2 + 36 \sigma^2 + 9 x^2 y^2 + 36 xy\sigma.
\end{equation}
We next prove that, contrarily to the Riemannian case \eqref{boe}, the function $|\nh f|^2$ badly fails to be subharmonic. We compute from \eqref{nh}
\begin{align*}
X_1(|\nh f|^2) & = 36 x |z|^2 - 12 x + 18 xy^2 + 36 y\sigma - 36 y\sigma - 18 xy^2
\\
& = 36 x |z|^2 - 12 x,
\end{align*}
and 
\begin{align*}
X_2(|\nh f|^2) & = 36 y |z|^2 - 12 y + 18 x^2 y + 36 x\sigma + 36 x\sigma + 18 x^2 y
\\
& = 36 y |z|^2 - 12 y + 36 x^2 y + 72 x\sigma.
\end{align*}
Next,
\begin{align*}
X_1^2(|\nh f|^2) & = 36  |z|^2 + 72 x^2 - 12,
\end{align*}
and 
\begin{align*}
X_2^2(|\nh f|^2) & = 36 |z|^2 + 72 y^2 - 12  + 72 x^2
\\
& = 108 |z|^2 - 12.
\end{align*}
Combining the latter two equations we find
\begin{equation}\label{fboc}
\sul (|\nh f|^2)= 216 x^2 + 144 y^2 - 24.
\end{equation}
It is now clear from \eqref{fboc} that
\begin{equation}\label{bad}
\sul (|\nh f|^2) \le 216 |z|^2 - 24 \le 0,
\end{equation}
provided that $|z|^2 \le \frac 19$. From Lemma \ref{L:sbm} and Proposition \ref{P:mono} we conclude that for the harmonic function $f$ in \eqref{f} the functional
\[
r\ \longrightarrow\ \mathscr D_2(|\nh f|^2,r)
\]
is nonincreasing for $r\in (0,1/3)$! 

For the second part of the proposition we need to compute $|\nhh f|^2$. We have
\[
\tilde X_1 f = f_x + \frac y2 f_\sigma = 1 - 3x^2 + 3 y^2,\ \ \ \tilde X_2 f = f_y - \frac x2 f_\sigma = 6 \sigma - 3 x y,
\]
and therefore
\begin{equation}\label{gud}
|\nhh f|^2 = (1 - 3x^2 + 3 y^2)^2 + (6 \sigma - 3 x y)^2.
\end{equation}
By \eqref{gud}, the fact that $|\nh \rho|^2 = \frac{|z|^2}{\rho^2}$, and the change of variable $(x,y,\sigma)\to (-x,-y,\sigma)$ (see \cite[formula (6.2)]{FF3}), we easily recognise that
\[
\mathscr D_\alpha(f_+,r) = \mathscr D_\alpha(f_-,r).
\]
Therefore, thanks to \eqref{B2r} in Proposition \ref{P:Br} and our Theorem \ref{T:babyACF}, we know that
\[
r\ \longrightarrow\ \mathscr D_\alpha(f_+,r) = \frac 12 \mathscr D_\alpha(f,r)\ \text{is nondecreasing for}\ r\in (0,\infty).
\] 
As a consequence, we infer that $r \longrightarrow \mathscr D_2(f_+,f_-,r) = \frac 14 \mathscr D_2(f,r)^2$ 
is nondecreasing on $(0,\infty)$.

\end{proof}

\begin{remark}\label{R:due}
It is interesting to observe that with $f$ as in \eqref{f} we have instead in the entire space $\mathbb H^1$
\[
\sul(|\nh f|^2 + \frac 13 (Tf)^2) =  216 x^2 + 144 y^2 - 24 + 24 \ge 0.
\]
As a consequence, the functional
$r\ \longrightarrow\ \mathscr D_2(|\nh f|^2+ \frac 13 (Tf)^2,r)$
is globally nondecreasing.
\end{remark}


\section{Failure of Almgren monotonicity formula in sub-Riemannian geometry}\label{S:almgren}

In this final section we disscuss the sub-Riemannian counterpart of another celebrated monotonicity formula from geometric PDEs. We recall that, in its simplest form, Almgren monotonicity formula states that if $\Delta f = 0$ in $B_1\subset \Rn$, then its \emph{frequency}
\[
N(f,r) = \frac{r \int_{B_r} |\nabla f|^2 dx}{\int_{S_r} f^2 d\sigma}
\]
is nondecreasing, see \cite{A}. This result plays a fundamental role in several areas of analysis and geometry, ranging from minimal surfaces, to unique continuation for elliptic and parabolic PDEs, and more recently free boundaries in which the obstacle is confined to a lower-dimensional manifold. We refer in particular to the papers \cite{GL1, GL2}, and to the more recent works \cite{ACS, CSS, GP, DGPT, BG}. 

In sub-Riemannian geometry the horizontal Laplacian \eqref{L} is not real-analytic hypoelliptic in general, and a fundamental open question is whether harmonic functions have the unique continuation property (ucp). An initial very interesting study of what can go wrong for smooth, even compactly supported, perturbations of \eqref{L} was done by H. Bahouri in \cite{Ba}. However, Bahouri's work does not provide any evidence, in favour or to the contrary, about the ucp for harmonic functions in a Carnot group. The reader is referred to \cite{GR} for a detailed discussion. In the same paper the authors have shown that, in a Carnot group $\bG$, given a harmonic function $f$ in a ball $B_1 \subset \bG$, the following sub-Riemannian analog of Almgren frequency
\begin{equation}\label{fre} 
N(f,r) = \frac{r \int_{B_r} |\nh f|^2 dg}{\int_{S_r} f^2 |\nh \rho| d\sigma_H}
\end{equation}
is nondecreasing in $r\in (0,1)$ provided that $f$ has vanishing \emph{discrepancy}, see also \cite{GL} for the first result in this direction in $\Hn$. In the surface integral in \eqref{fre} the symbol $d\sigma_H$ denotes the horizontal perimeter measure. It is obvious that if the frequency is nondecreasing on an interval $(0,r_0)$, then one has in particular $N(f,\cdot)\in L^\infty(0,r_0)$. In \cite[Theor. 4.3]{GR} it was shown that, in fact, the local boundedness of $N(f,\cdot)$ is necessary and sufficient for the following \emph{doubling condition}
\begin{equation}\label{dc}
\int_{B_{2r}} f^2 dg \le C \int_{B_r} f^2 dg, \ \ \ \ \ 0<r<r_0.
\end{equation}
It is well-known by now (see \cite{GL1}) that \eqref{dc} implies the strong unique continuation property for $f$.

In a Carnot group $\bG$ the local boundedness of the frequency of a harmonic function $f$ is a fundamental open problem (to be proved, or disproved). In \cite[Theor.8.1]{GR} it was shown that \eqref{dc} is true for harmonic functions in a Metivier group, and therefore in such Lie groups (which include those of Heisenberg type) the frequency \eqref{fre} is locally bounded. The following discussion shows that not even in $\Hn$ one should expect the frequency to be generically nondecreasing. We emphasise that this phenomenon of monotonicity versus boundedness is connected to the ``almost monotonicity" character of the conjecture in Section \ref{S:proof}.

We recall that in \cite[Prop.3.6]{GR} it was shown that if $f$ is harmonic in a Carnot group, then
\begin{equation}\label{dir}
\int_{B_r} |\nh f|^2 dg = \frac 1r \int_{S_r} f Zf |\nh \rho| d\sigma_H,
\end{equation}
where $Z$ denotes the generator of the group dilations in $\bG$. Combining \eqref{fre} with \eqref{dir} we see that we can express the frequency in the useful alternative fashion
\begin{equation}\label{fre2}
N(f,r) = \frac{\int_{S_r} f Zf |\nh \rho| d\sigma_H}{\int_{S_r} f^2 |\nh \rho| d\sigma_H}.
\end{equation}
We emphasise that \eqref{fre2} does immediately imply that if $f$ is a harmonic function homogeneous of degree $\kappa$, then $N(f,r) \equiv \kappa$. We do not know whether the opposite implication holds in general! The main reason is that, even when $\bG = \Rn$, the only known proof of such implication seem to crucially rest on the full-strength of Almgren monotonicity formula.

Suppose now that $P_h$ and $P_k$ are two harmonic functions in $\bG$, respectively of homogeneous degree $h\not= 0$ and $k\not= 0$, and suppose to fix the ideas that $h<k$. If $f = P_h + P_k$, we have
\[
f Zf = f\left(ZP_h + Z P_k\right) = f\left(h P_h + k P_k\right) = h f^2 + (k-h) f P_k.
\]
Inserting this information in \eqref{fre2} we find
\begin{equation}\label{fre3}
N(f,r) = h + (k-h) \frac{\int_{S_r} f P_k |\nh \rho| d\sigma_H}{\int_{S_r} f^2 |\nh \rho| d\sigma_H}.
\end{equation}
It is clear from \eqref{fre3} that on a harmonic function of the type $f = P_h + P_k$ the frequency is nondecreasing if and only if such is the quantity 
\[
\mathscr E(r) = \frac{\int_{S_r} f P_k |\nh \rho| d\sigma_H}{\int_{S_r} f^2 |\nh \rho| d\sigma_H}.
\]
Suppose that, similarly to the case $\bG = \Rn$, we knew
\begin{equation}\label{wrong2}
\int_{S_1} P_h P_k |\nh \rho| d\sigma_H = \begin{cases} 0,\ \ \ \ \ \text{if}\ h\not= k,
\\
a_h >0\ \ \ \ \ \ \text{if}\ h = k.
\end{cases}
\end{equation}
From \eqref{wrong2} we would immediately infer by rescaling ($d\sigma_H \circ \delta_r = r^{Q-1} d\sigma_H$) that
\[
\mathscr E(r) = \frac{a_k r^{k-h}}{a_h + a_k r^{k-h}},
\]
and this would easily imply $\mathscr E'(r) \ge 0$. But in sub-Riemannian geometry the ``Euclidean" looking identity \eqref{wrong2} fails to be true in general. This negative phenomenon was already brought to light in the context of $\Hn$ in \cite[Theor. 1.1]{GL}, and this is why that result contained the additional assumption (1.19), and  in \cite[Def. 5.1]{GR} the notion of \emph{discrepancy} was introduced. What is true, instead, in any Carnot group, is the following formula
\begin{equation}\label{true}
\int_{S_r} P_h \frac{\langle \nh P_k,\nh \rho\rangle}{|\nabla \rho|} d H_{N-1} = \int_{S_r} P_k \frac{\langle \nh P_h,\nh \rho\rangle}{|\nabla \rho|} d H_{N-1},
\end{equation}
but, as we next show, \eqref{true} is a far cry from its Euclidean counterpart containing the Euler vector field and the Euclidean norm. To understand this comment we recall \cite[Lemma 6.8]{GR} (see also \cite[formula (2.22)]{GL} for $\Hn$), that states that when $\bG$ is a group of Heisenberg type, with logarithmic coordinates $g = (z,\sigma)$, then for $f\in C^1(\bG)$ one has
\begin{equation}\label{screwy}
\langle \nh f,\nh \rho\rangle =  \frac{Zf}{\rho} |\nabla_H \rho|^2  + \frac{4}{\rho^3} \sum_{\ell=1}^{m_2} \sigma_\ell \Theta_\ell(f),
\end{equation}
where
\[
\Theta_\ell = \sum_{i<j} b^\ell_{ij} \left(z_i \p_{z_j} -  z_j \p_{z_i}\right).
\]
The vector fields $\Theta_\ell$, which come from the complex structure of $\bG$, are the reason for the failure of 
\eqref{wrong2}, and in view of \eqref{difference} also of the failure of the nondecreasing character of Theorems \ref{T:babyACF} and \ref{T:babyC} if we change $|\nhh f|^2$ into $|\nh f|^2$. In view of \eqref{screwy}, when $\bG$ is of Heisenberg type we obtain from \eqref{true}
\begin{equation}\label{noperp}
(k-h) \int_{S_1} P_h P_k |\nh \rho| d\sigma_H = 4 \sum_{\ell=1}^{m_2} \int_{S_1}  \sigma_\ell \big\{P_h \Theta_\ell(P_k) - P_k \Theta_\ell(P_h)\big\} \frac{d H_{N-1}}{|\nabla \rho|}, 
\end{equation}
but it is not true that the right-hand side of \eqref{noperp} generically vanishes when $h\not= k$. This lack of orthogonality of the spherical harmonics causes the nondecreasing monotonicity of the frequency \eqref{fre} to fail for a harmonic function of the type $f = P_h + P_k$. As a consequence, one cannot expect an Almgren type monotonicity formula on a generic harmonic function $f$, unless additional assumptions are imposed on $f$ itself.

We close by illustrating this claim. Suppose that $\bG = \mathbb H^1$ and consider either one of the harmonic functions in $\mathbb H^1$ given in \eqref{fmia} or \eqref{f} above. If to fix the ideas we consider \eqref{f}, since $f = P_1 + P_3$, where $P_1(x,y,\sigma) = x$ and $P_3(x,y,\sigma) = 6 y \sigma - x^3$, with $Z = x \p_x + y \p_y + 2 \sigma \p_\sigma$ we presently have $Z P_1 = P_1$, $Z P_3 = 3 P_3$. As a consequence,  \eqref{fre3} gives 
\[
N(f,r) = 1 + 2 \frac{\int_{S_r} f P_2 |\nh \rho| d\sigma_H}{\int_{S_r} f^2 |\nh \rho| d\sigma_H} = 1 + 2\ \mathscr E(r),
\]
where we have let 
\begin{equation}\label{Eheis}
\mathscr E(r) = \frac{\int_{S_r} f P_3 |\nh \rho| d\sigma_H}{\int_{S_r} f^2 |\nh \rho| d\sigma_H} = \frac{\int_{S_1} f(\delta_r g) P_3(\delta_r g) |\nh \rho| d\sigma_H}{\int_{S_1} f(\delta_r g)^2 |\nh \rho| d\sigma_H}.
\end{equation} 
Observe now that 
\[
f(\delta_r g) P_3(\delta_r g) = (r P_1(g) + r^3 P_3(g)) r^3 P_3(g) = r^4 P_1(g) P_3(g) + r^6 P_3(g)^2,
\]
and
\[
f(\delta_r g)^2 = (r P_1(g) + r^3 P_3(g))^2 = r^2 P_1(g)^2
+ 2 r^4 P_1(g) P_3(g) + r^6 P_3(g)^2.
\]
Now notice that $P_1 P_3 = 6 xy \sigma - x^4$. Since $xy \sigma$ is odd, if we set
\[
a = \int_{S_1} P_1^2 |\nh \rho| d\sigma_H,\ \ \ b = \int_{S_1} x^4 |\nh \rho| d\sigma_H,\ \ \ c = \int_{S_1} P_3^2 |\nh \rho| d\sigma_H,
\]
then $a, b, c >0$, and we have from \eqref{Eheis}
\[
\mathscr E(r) = \frac{- b r^4 + c r^6}{a r^2 - 2 b r^4 + c r^6}  = \frac{- b r^2 + c r^4}{a  - 2 b r^2 + c r^4}.
\]
A simple calculation gives
\[
\mathscr E'(r) = - 2 r \frac{ab + 2 ac r^2 - bc r^4}{(a  - 2 b r^2 + c r^4)^2} \le 0,
\]
provided that $0\le r\le r_0$, for some $r_0>0$ sufficiently small.
Therefore, $r\to N(f,r)$ is nonincreasing on $(0,r_0)$, instead on nondecreasing!


\bibliographystyle{amsplain}

\end{document}